\newtheorem{rem}{Remark}
\newtheorem{prop}{Proposition}
\newtheorem{thm}{Theorem}
\def\be{\begin{equation}}
\def\ee{\end{equation}}
\def\bea{\begin{eqnarray}}
\def\eea{\end{eqnarray}}
\def\RR{\mathbb R}
\begin{document}

\title{Structure preserving stochastic Galerkin methods for Fokker-Planck equations with background interactions}

\author{Mattia Zanella \\
{\small Department of Mathematical Sciences ``G. L. Lagrange''} \\
		{\small Dipartimento di Eccellenza 2018-2022} \\
		{\small Politecnico di Torino, Torino, Italy} \\
		{\small\tt mattia.zanella@polito.it}}

\date{}


\maketitle
\begin{abstract}
This paper is devoted to the construction of structure preserving stochastic Galerkin schemes for Fokker-Planck type equations with uncertainties and interacting with an external distribution, that we refer to as  a background distribution. The proposed methods are capable to preserve physical properties in the approximation of statistical moments of the problem like nonnegativity, entropy dissipation and asymptotic behaviour of the expected solution. The introduced methods are second order accurate in the transient regimes and high order for large times. We present applications of the developed schemes to the case of fixed and dynamic background distribution for models of collective behaviour. 

\medskip

\textbf{Keywords}: uncertainty quantification, stochastic Galerkin, Fokker-Planck equations, collective behaviour.\\

\textbf{MSC}: 35Q70,35Q83,65M70. 
\end{abstract}

\section{Introduction}
\label{sec1}

Uncertainty quantification (UQ) for partial differential equations describing real world phenomena gained an increased interest in recent years \cite{CPZ,DPL,DPZ,HJJ,HJ,HJX,TZ,X}. One of the main advantages of UQ methods relies in its capability to provide a sound mathematical framework to replicate realistic experiments. The introduction of stochastic parameters reflects our incomplete information on the initial configuration of a system, on its inner interactions forces and on the modelling parameters as well. Methods and ideas of UQ had a deep impact on the applied community in terms of effectivity for a robust conception, modelling and sensitivity analysis of the problem of interest.  

In the context of kinetic equations, this issue can be translated on a general uncertainty affecting a distribution function of particles/agents, whose evolution is influenced by the presence of a random variable $\theta$, taking value in the set $I_{\Theta}\subseteq \RR$, and with known probability distribution function $\Psi(\theta): I_{\Theta}\rightarrow \RR^+$. In particular, in the present manuscript we are interested in Fokker-Planck type equations for the evolution of the distribution $f = f(\theta,v,t)$, $v\in V\subseteq \RR^{d_v}$, $\theta\in I_{\Theta}$ and $t\ge 0$ is the time. The introduced distribution represents the proportion of particles/agents in $[v,v+dv]$ at time $t\ge 0$ and for given value of uncertainty $\theta\in I_{\Theta}$. In more details, we consider the partial differential equation
\begin{equation}\label{eq:FP_general}
\partial_t f(\theta,v,t) = \nabla_v \cdot \left[  \mathcal B[g](v,t)f(\theta,v,t)+\nabla_v (D(v)f(\theta,v,t)) \right],
\end{equation}
where $v\in V \subseteq\RR^{d_v}$ and $\mathcal B[\cdot]$ is the operator
\be\label{eq:B}
\mathcal B[g] (v,t) = \int_{V} P(v,v_*) (v-v_*) g(v_*,t)dv_*,
\ee
where $g = g(v,t)$ is a background distribution, whose dynamics do not incorporate the presence of the uncertain quantity $\theta\in I_{\Theta}$. In applications, problems with background interactions are very often considered to mimic the influence of environmental factors on the agents' dynamics especially in socio-economic and life sciences. For example, the process of knowledge formation depends on social factors that determine the progress in competence acquisition of individuals, see \cite{PT2,PVZ} and the references therein. Similarly, in soft-matter physics biological particles like cells undergo various heterogeneous stimuli forcing their observable motion \cite{PreziosiTosin}. Other examples have been studied in opinion dynamics, economic processes for the formation of wealth distributions, and urban growth theory, see \cite{GT} for a review. 

We consider for \eqref{eq:FP_general} an uncertain initial distribution $f(\theta,v,0)$, no-flux boundary conditions are considered on the boundaries of the domain to enforce conservation of the total mass of the system. A clear understanding on the global behavior of the system governed by \eqref{eq:FP_general}-\eqref{eq:B} is obtained in terms of expected statistical quantities whose accurate and physically admissible description is therefore of paramount importance. 

Due to the increased dimensionality of the problem induced by the presence of uncertainties, the issue of developing fast converging numerical methods for the approximation of statistical quantities is of the highest importance. Among the most popular numerical methods for the UQ, stochastic Galerkin (SG) methods gained in recent years increasing interest since they provide spectral convergence in the random space under suitable regularity assumptions \cite{APZa,JXZ,X,XK,ZJ}. Despite their accuracy, direct application of SG techniques to high dimensional random vectors often suffers of the so-called curse of dimensionality and require ad-hoc solutions, see e.g. \cite{SHJ}.  Similarly to classical spectral methods SG methods generally require a strong modification of the original problem and can lead to the loss of structural properties like positivity of the solution, entropy dissipation and hyperbolicity, when applied to hyperbolic and kinetic equations, see \cite{DPL,JP}. The loss of structural properties of the solution induces an evident gap in its true physical meaning. To overcome this problem, recently has been proposed a novel methods that combines both Monte Carlo and SG generalized polynomial chaos methods (MCgPC) and which preserves spectral accuracy in the random space. In particular, MCgPC methods can mitigate the curse of dimensionality induced by SG-gPC. We refer to \cite{CZ,CPZ} for a detailed discussion on these methods.

In the present manuscript we construct structure preserving methods for the SG formulation of the problem in the case of background interactions. In order to do that we will take advantages of structure preserving (SP) methods \cite{PZ1,PZ2}, that have been designed to preserve the mentioned structural properties of the solution of nonlinear Fokker-Planck equations without restriction on the mesh size. We consider applications of the developed schemes both in case of fixed and dynamic background. 

The rest of the paper is organized as follows. In Section \ref{sect:2} we briefly introduce stochastic Galerkin methods for the problem of interest where the interactions take place with respect to a deterministic background, stability results are proved and discussed together with the analysis of trends to asymptotic states. In Section \ref{sect:structure} we derive structure preserving methods in the Galerkin setting, positivity conditions for explicit and semi-implicit schemes are discussed and we prove entropy inequality for a class of one dimensional Fokker-Planck models. Several applications of the schemes are finally considered in Section \ref{sect:num} for several problems arising in the description of collective phenomena in socio-economic and life-sciences. Some conclusions are reported at the end of the manuscript. 

\section{Stochastic Galerkin methods for kinetic equations}\label{sect:2}
 For simplicity of presentation we consider the case $d_v=1$. We focus on real-valued distributions depending on a one dimensional random input. Let $(\Omega,F,P)$ be a probability space where as usual $\Omega$ is the sample space, $F$ is a $\sigma-$algebra and $P$ a probability measure, and let us defined a random variable 
\[
\theta: (\Omega,F) \rightarrow (I_{\Theta},\mathcal B_{\mathbb R}),
\]
with $I_\Theta\subset \mathbb R$ and $\mathcal B_{\mathbb R}$ is the Borel set. We focus on real-valued distributions of the form $f(\theta,v,t): \Omega \times V \times [0,T] \rightarrow \mathbb R^d$. In the present section we derive a stochastic Galerkin approximation for Fokker-Planck equation with uncertain initial distribution and background interactions \eqref{eq:FP_general}. 

Let us consider the linear space $\mathbb P_M$ of polynomials of degree up to $M$ generated by a family of orthogonal polynomials $\{\Phi_h(\theta) \}_{h=0}^M$ such that
\[
\mathbb E[\Phi_h(\theta)\Phi_k(\theta)] = \int_{I_{\Theta}} \Phi_h(\theta)\Phi_k(\theta)\Psi(\theta)d\theta = \|\Phi_h^2(\theta) \|_{L^2(\Omega)}\delta_{hk},
\]
being $\delta_{hk}$ the Kronecker delta function. Assuming that $\Psi(\theta)$ has finite second order moment we can approximate the distribution $f \in L^2(\Omega,\mathcal F,P)$ in terms of the following chaos expansion
\be\label{eq:fM}
f(\theta,v,t) \approx f^M(\theta,v,t) = \sum_{k=0}^M \hat f_k(v,t)\Phi_k(\theta),
\ee
being $\hat f_k(v,t)$ the projection of $f$ into the polynomial space of degree $k$, i.e.
\[
\hat{f}_k(v,t) = \mathbb E[f(\theta,v,t)\Phi_k(\theta)], \qquad k = 0,\dots,M.
\]
Plugging $f^M$ into \eqref{eq:FP_general} we obtain 
\be\label{eq:FP_M}
\partial_t f^M(\theta,v,t) =\partial_v \left[  \mathcal B[g](v,t)f^M(\theta,v,t) + \partial_v (D(v) f^M(\theta,v,t)) \right].
\ee
Hence, by multiplying \eqref{eq:FP_M} by $\Phi_h(\theta)$ for all $h=0,\dots,M$ and after projection in each polynomial space we obtain the following system of $M+1$ deterministic kinetic-type PDEs 
\be\label{eq:FP_h}
\partial_t \hat f_h(v,t) = \partial_v\left[ \mathcal B[g](v,t) \hat f_h(v,t) + \partial_v (D(v)\hat f_h(v,t))\right],
\ee
with the initial conditions
\[
\hat f_h(v,0) = \mathbb E[f(\theta,v,0)\Phi_h(\theta)]. 
\]
The related deterministic subproblems can be tackled through suitable numerical methods and the approximation of statistical quantities of interest are defined in terms of the projections. In particular we have
\be\label{eq:expected}
\mathbb E[f(\theta,v,t)]\approx \hat f_0(v,t), 
\ee
whose evolution is given by \eqref{eq:FP_h} in the case $h=0$. Thanks to the orthogonality in $L^2(\Omega)$ of the polynomials $\{\Phi_h\}_{h=0}^M$ we have
\[
\mathbb E[f(\theta,v,t)^2]- \mathbb E[f(\theta,v,t)]^2 \approx \mathbb E[(f^M(\theta,v,t))^2]- \mathbb E[f^M(\theta,v,t)]^2,
\]
and from \eqref{eq:fM} we have
\[
 \mathbb E\left[ \sum_{k=0}^M \hat f_k^2(v,t)\Phi_k^2(\theta) + 2\sum_{k=0}^M \sum_{h = 0}^{k-1} \hat f_k(v,t)\hat f_h(v,t) \Phi_k(\theta)\Phi_h(\theta)\right] - \hat f_0^2(v,t).
\]
Therefore the variance of the solution is approximated in terms of the projections as follows
\be\label{eq:var}
\textrm{Var}[f(\theta,v,t)] \approx \sum_{k = 0}^M \hat f_h^2(v,t) \mathbb E[\Phi_k^2] - \hat f_0^2(v,t). 
\ee

We observe that the initial mass defined by $\int_V \hat f_h(v,0)dv $ is conserved in time assuming no-flux boundary conditions, i.e.
\[
\mathcal B[g](v,t) + \partial_v D(v) = 0,\qquad v\in\partial V. 
\]

Let us introduce the vector $\hat{\textbf{f}}(v,t) = \left( \hat f_0(v,t),\dots,\hat f_M(v,t)\right)$. If we define as $\|\hat{\textbf{f}}(v,t) \|_{L^2}$ the standard $L^2$ norm of the vector $\hat{\textbf{f}}(v,t)$
\[
\|\hat{\textbf{f}}(v,t) \|_{L^2} = \left[ \int_{V} \left(\sum_{h=0}^M \hat f_h^2(v,t) \right)dv\right]^{1/2},
\]
then from the orthonormality of the introduced basis $\{\Phi_h\}_{h=0}^M$ in $L^2(\Omega)$ we have that 
\[
\| f^M(\theta,v,t) \|_{L^2(\Omega)} = \|\hat{\textbf{f}}(v,t) \|_{L^2},
\]
 where
\[
\|f^M(\theta,v,t)\|_{L^2(\Omega)} = \left(\int_{I_\Theta} \int_{V}\left( \sum_{h=0}^M \hat f_h(v,t) \Phi_h(\theta)\right)^2dv \Psi(\theta)d\theta\right)^{1/2}.
\]
We can reformulate the problem \eqref{eq:FP_h} in a more compact form as follows
\[
\partial_t \hat{\textbf{f}}  = \partial_v \left[ \textbf{B}\hat{\textbf{f}} + \textbf{D}\partial_v \hat{\textbf{f}}\right],
\]
where $\textbf{B} = \{B_{ij}\}_{i,j=1}^{M+1}$ and $\textbf{D}=\{D_{i,j}\}_{i,j=1}^N$ are diagonal matrices with components
\[
\begin{split}
\textbf{B}_{i,i} &= \mathcal B[g](v,t) + \partial_v D(v),   \qquad \textbf{B}_{i,j} = 0\\
\textbf{D}_{i,i} &= D(v), \qquad\qquad\qquad\qquad\,\textbf{D}_{i,j}  = 0.
\end{split}\]
The following stability result can be established
\begin{thm}
If $\| \partial_v \mathcal B[g](v,t)\|_{L^\infty} \le C_{\mathcal B}$, with $C_{\mathcal B}>0$, and if $D\le C_D$ we have for all times $t>0$
\[
\|\hat{\normalfont{\textbf{f}}}(v,t) \|_{L^2}^2 \le e^{t(C_{\mathcal B} + 2C_D)}\|\hat{\normalfont{\textbf{f}}}(v,0) \|_{L^2}^2,
\]
provided that the boundary terms vanish for all $h=0,\dots,M$
\[
\hat{f}^2_h(v,t)\mathcal B[g](v,t) \Big|_{v\in\partial V} = 0, \quad \hat{f}_h(v,t)\partial_v(D(v)\hat{f}_h(v,t)) \Big|_{v\in\partial V}  =0, \quad \hat{f}_h(v,t)D(v)\partial_v \hat{f}_h(v,t) \Big|_{v\in\partial V}  = 0
\]
\end{thm}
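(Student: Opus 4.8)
The plan is to run an $L^2$ energy estimate for the Galerkin system \eqref{eq:FP_h} and close it with Gr\"onwall's inequality. First I would multiply the $h$-th equation in \eqref{eq:FP_h} by its projection $\hat f_h(v,t)$, integrate over the velocity domain $V$, and sum over $h=0,\dots,M$, which gives
\[
\frac12\frac{d}{dt}\|\hat{\textbf{f}}(v,t)\|_{L^2}^2=\sum_{h=0}^M\int_V \hat f_h\,\partial_v\big[\mathcal B[g]\hat f_h+\partial_v(D\hat f_h)\big]\,dv.
\]
The same computation can be performed on the compact form $\partial_t\hat{\textbf{f}}=\partial_v(\textbf{B}\hat{\textbf{f}}+\textbf{D}\partial_v\hat{\textbf{f}})$, exploiting that $\textbf{B}$ and $\textbf{D}$ are diagonal so that the vector bilinear forms reduce to scalar integrals weighted by $\mathcal B[g]+\partial_v D$ and by $D$ respectively.

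Next I would integrate by parts to move the $v$-derivatives off the flux, using repeatedly the identity $\hat f_h\partial_v\hat f_h=\tfrac12\partial_v(\hat f_h^2)$. Each integration by parts produces a boundary contribution on $\partial V$, and the three assumptions of the statement are exactly what is needed to cancel them: $\hat f_h^2\mathcal B[g]|_{\partial V}=0$ eliminates the advective boundary terms, while $\hat f_h\partial_v(D\hat f_h)|_{\partial V}=0$ and $\hat f_hD\partial_v\hat f_h|_{\partial V}=0$ eliminate the diffusive ones (subtracting the last two one moreover gets $(\partial_v D)\hat f_h^2|_{\partial V}=0$, which is what keeps the splitting $\mathcal B[g]+\partial_v D$ of the drift coefficient harmless at the boundary). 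After these manipulations one is left, for each $h$, with the identity
\[
\int_V \hat f_h\,\partial_v\big[\mathcal B[g]\hat f_h+\partial_v(D\hat f_h)\big]\,dv=\frac12\int_V(\partial_v\mathcal B[g])\hat f_h^2\,dv+\frac12\int_V(\partial_v^2 D)\hat f_h^2\,dv-\int_V D\,(\partial_v\hat f_h)^2\,dv,
\]
and, $D$ being a nonnegative diffusion coefficient, the dissipative term $-\int_V D(\partial_v\hat f_h)^2\,dv\le0$ may be discarded.

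It then remains to bound the two surviving volume integrals through the assumed $L^\infty$ bounds: $\|\partial_v\mathcal B[g]\|_{L^\infty}\le C_{\mathcal B}$ gives $\tfrac12\sum_h\int_V(\partial_v\mathcal B[g])\hat f_h^2\,dv\le\tfrac12 C_{\mathcal B}\|\hat{\textbf{f}}\|_{L^2}^2$, while the bound $D\le C_D$ gives $\tfrac12\sum_h\int_V(\partial_v^2 D)\hat f_h^2\,dv\le C_D\|\hat{\textbf{f}}\|_{L^2}^2$ (a term which is in fact absent when $D$ is constant). Summing over $h$ one obtains
\[
\frac{d}{dt}\|\hat{\textbf{f}}(v,t)\|_{L^2}^2\le(C_{\mathcal B}+2C_D)\|\hat{\textbf{f}}(v,t)\|_{L^2}^2,
\]
and Gr\"onwall's inequality yields the stated estimate. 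The bulk of the work is the algebra of the repeated integrations by parts; the point I would treat most carefully is the bookkeeping of the boundary contributions — verifying that the three listed conditions are exactly what annihilates every boundary term created while transferring the derivatives — together with arranging the diffusive term so that, once the nonpositive dissipative piece is dropped, the residual volume integral is genuinely controlled by the single constant $C_D$.
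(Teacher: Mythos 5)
Your strategy coincides with the paper's: multiply \eqref{eq:FP_h} by $\hat f_h$, integrate over $V$, integrate by parts using the three boundary conditions, bound the advective contribution by $\tfrac{C_{\mathcal B}}{2}\|\hat{\textbf{f}}\|_{L^2}^2$, and conclude by Gr\"onwall. Your handling of the drift term and your bookkeeping of the boundary contributions (including the remark that subtracting the two diffusive conditions gives $(\partial_v D)\hat f_h^2\big|_{\partial V}=0$) are correct, and in fact more explicit than the paper's own computation, which rewrites the diffusive integral as $\int_V(\partial_v^2\hat f_h)\,D\,\hat f_h\,dv$ and bounds it by $-C_D\int_V(\partial_v\hat f_h)^2dv$ without tracking the $\partial_v D$ cross terms at all.

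The one step that does not hold as written is the estimate $\tfrac12\sum_h\int_V(\partial_v^2 D)\,\hat f_h^2\,dv\le C_D\|\hat{\textbf{f}}\|_{L^2}^2$: the hypothesis $D\le C_D$ is a pointwise bound on $D$ itself and gives no control on $\partial_v^2 D$ (in the paper's tests $D(v)=\tfrac{\sigma^2}{2}(1-v^2)^2$, whose second derivative need not be dominated by $2C_D$). To close your version of the argument for nonconstant diffusion you would need an extra hypothesis such as $\|\partial_v^2 D\|_{L^\infty}\le 2C_D$, or you should keep the cross term $-\int_V\hat f_h(\partial_v\hat f_h)\,\partial_v D\,dv$ un-integrated and absorb it into the dissipative term via Young's inequality, which again requires control of $(\partial_v D)^2/D$ rather than of $D$. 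Note that the paper's proof effectively discards the whole diffusive contribution as nonpositive and then uses the trivial bound $-\|\partial_v\hat{\textbf{f}}\|_{L^2}^2\le C_D\|\hat{\textbf{f}}\|_{L^2}^2$, so the $2C_D$ in the exponent is pure slack; your argument reduces to exactly that (and is then complete) in the constant-$D$ case you mention, but as stated the nonconstant-$D$ case is not covered by the hypotheses you invoke.
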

\begin{proof}
We multiply \eqref{eq:FP_h} by $\hat f_h(v,t)$ and integrate over $V\subseteq \mathbb R$
\begin{equation}\begin{split}\label{eq:th1}
\int_V \partial_t \left(\dfrac{1}{2}\hat f_h^2(v,t) \right)dv &= \int_V \hat f_h(v,t) \partial_v  \left[\mathcal B[g](v,t)\hat f_h(v,t) + \partial_v (D(v)\hat f_h(v,t)) \right] dv  \\
& =  \underbrace{\int_V \hat{f}_h(v,t) \partial_v \left( \mathcal B[g](v,t) \hat{f}_h(v,t)\right)dv}_{A} + \underbrace{\int_V \hat{f}_h(v,t) \partial_v^2 \left(D(v)\hat{f}_h(v,t)\right)dv}_{B}
\end{split}\end{equation}
The integral $A$ may be rewritten as follows by direct computation
\[
\int_V \hat f_h(v,t) \partial_v  \left(\mathcal B[g](v,t)\hat f_h(v,t)\right)dv = \int_V \hat{f}_h(v,t) \mathcal B[g](v,t)\partial_v \hat f_h(v,t)dv + \int_V \hat{f}_h^2(v,t) \partial_v \mathcal B[g](v,t)dv
\]
and, integrating by parts, it is equivalent to 
\[\begin{split}
 &\int_V \hat f_h(v,t) \partial_v  \left(\mathcal B[g](v,t)\hat f_h(v,t)\right)dv =  \\
 &\qquad \qquad - \int_V \hat f_h(v,t) \partial_v \left(\mathcal B[g](v,t)\hat f_h(v,t)\right)dv+  \int_V \hat f_h^2(v,t)\partial_v \mathcal B[g](v,t)dv 
\end{split}\]
being $\hat{f}^2_h(v,t)\mathcal B[g](v,t) \Big|_{v\in\partial V} = 0$. Therefore, the integral $A$ may be written as
\[
\int_V \hat{f}_h(v,t)\partial_v \left(\mathcal B[g](v,t)\hat{f}_h(v,t) \right)dv = \dfrac{1}{2}\int_V \hat{f}_h^2 (v,t) \partial_v \mathcal B[g](v,t)dv.
\]
Hence, the following estimate holds
\[
\sum_{h=0}^M\int_V \hat f_h(v,t) \partial_v  \left(\mathcal B[g](v,t)\hat f_h(v,t)\right)dv = 
\dfrac{1}{2}\sum_{h=0}^M \int_V \hat f_h^2(v,t)\partial_v \mathcal B[g](v,t)dv \le \dfrac{C_{\mathcal B}}{2} \| \hat{\textbf{f}}(v,t) \|^2_{L^2}.
\]
Furthermore, for the integral $B$ in \eqref{eq:th1} we have
\[
\begin{split}
\int_V \hat{f}_h(v,t) \partial_v^2 \left(D(v)\hat f_h(v,t) \right)dv &= \int_V \left(\partial_v^2 \hat f_h(v,t)\right)D(v)\hat f_h(v,t)dv \\
&\le -C_D \int_V \left(\partial_v \hat f_h(v,t)\right)^2,
\end{split}
\]
provided $\hat{f}_h(v,t)\partial_v(D(v)\hat{f}_h(v,t)) \Big|_{v\in\partial V}  =0$, and $\hat{f}_h(v,t)D(v)\partial_v \hat{f}_h(v,t) \Big|_{v\in\partial V}  = 0$.

Finally, after summation on $h=0,\dots,M$ of the obtained bounds, we get
\[
\begin{split}
\dfrac{1}{2}\|\hat{\textbf{f}}(v,t) \|_{L^2}^2& \le \dfrac{C_{\mathcal B}}{2}\|\hat{\normalfont{\textbf{f}}}(v,t)  \|_{L^2}^2   - \| \partial_v \hat{\normalfont{\textbf{f}}}(v,t) \|_{L^2}^2 \\
& \le \left(\dfrac{C_{\mathcal B}}{2} + C_D\right)\| \hat{\textbf{f}} \|_{L^2},
\end{split}\]
and thanks to the Gronwall's theorem we can conclude. 
\end{proof}

\begin{rem}
The background distribution $g(v,t)$ is in general ruled by an additional PDE that does not depend on the stochastic density function $f(\theta,v,t)$ and does not incorporate additional uncertainties. In the case of evolving background we need to couple to \eqref{eq:FP_general} its dynamics. 
\end{rem}

\subsection{Asymptotic behaviour}\label{subsect:asymp}
Under suitable smoothness assumptions the introduced Fokker-Planck equation has a unique smooth solution, see e.g. \cite{Pav,Risken}. In the present section we concentrate on the large time solution of the introduced class of problems, known in the literature as equilibrium solutions or steady states. Assuming that the dynamics of the background $g(v,t)$ admit a unique stationary state the asymptotic distribution of \eqref{eq:FP_general} is solution of the differential equation
\begin{equation}\label{eq:cond_finf}
\mathcal B[g^\infty](v) f^\infty(\theta,v) + \partial_v (D(v)f^\infty(\theta,v)) = 0,
\end{equation}
which gives
\[
\dfrac{\partial_v f^\infty(\theta,v)}{f^\infty(\theta,v)} =  - \dfrac{\mathcal B[g^\infty](v) + D^\prime(v)}{D(v)},
\]
and therefore the analytical stationary distribution of the original problem reads
\be\label{eq:quasi}
f^\infty(\theta,v) = C(\theta) \exp \left\{ - \int \dfrac{\mathcal B[g^\infty](v) + D^\prime(v)}{D(v)}dv\right\},
\ee
being $C(\theta)>0$ a normalization constant depending only on the initial uncertainties of the problem. \\
On the other hand, the asymptotic solutions $f^\infty_h(v)$ of \eqref{eq:FP_h} in each polynomial space of degree $h=0,\dots,M$ are defined by solving the following set of differential equations
\be\label{eq:FP_h_stat}
\mathcal B[g^\infty](v)\hat{f}^\infty_h(v) + \partial_v (D(v)\hat{f}^\infty_h(v)) = 0, \qquad h = 0,\dots,M,
\ee
whose stationary states are
\be\label{eq:quasih}
\hat f^\infty_h(v) = C_h \exp\left\{- \int \dfrac{\mathcal B[g^\infty](v) +D^\prime(v)}{D(v)}dv \right\}
\ee
being $C_h$ such that 
\[
\int_{I_{\Theta}} f^\infty(\theta,v)\Phi_h(\theta)\Psi(\theta)d\theta = \hat{f}^\infty_h(v). 
\]
We can observe how, if the initial state has deterministic mass $\int_V f(\theta,v,0)dv = \bar \rho>0$, the asymptotic state of the problem given by \eqref{eq:quasi} does not incorporate any uncertainty. In particular, this is induced by the fact that the normalization constant does not depend anymore on the uncertainty of the problem, i.e. $C(\theta) = \bar C$ for all $\theta\in I_{\Theta}$. This fact reflects on the asymptotic state of each projection $\hat f_h^\infty(v)$, $h=0,\dots,M$, since $\mathbb E[\bar C\Phi_h(\theta)]=0$ for $h>0$. Therefore, in the case of deterministic initial mass we obtain 
\[
\hat f_h^\infty(v) = 
\begin{cases}
\bar C \exp\left\{- \displaystyle \int \dfrac{\mathcal B[g^\infty](v) +D^\prime(v)}{D(v)}dv \right\} & \textrm{if}\quad h = 0\\
0 & \textrm{if}\quad h>0,
\end{cases}
\]
and the variance of $f(\theta,v,t)$ vanishes asymptotically. In the general case of uncertain initial mass the asymptotic state still depends on $\theta\in I_\Theta$. \\

In the following we explicit the trend to equilibrium defined by stochastic background interaction models following the ideas in \cite{FPTT}.

\subsubsection{Constant background}\label{subsect:constant}
Let us assume that the background is fixed, i.e. $\mathcal B[g](v,t) = \mathcal B[g](v)$. In particular, the stationary distribution of  \eqref{eq:FP_general} is solution of the following differential equation  
\begin{equation}\label{eq:cond_gcons}
\mathcal B[g](v)f^\infty(\theta,v) + \partial_v\left( D(v)f^\infty(\theta,v) \right) = 0, \qquad  v\in V\subseteq \mathbb R. 
\end{equation}
Taking advantage of the condition \eqref{eq:cond_gcons} we can observe that \eqref{eq:FP_general} admits several equivalent formulations. Indeed we have for all $t\ge 0$
\[
\begin{split}
\mathcal B[g](v)f(\theta,v,t) + \partial_v(D(v)f(\theta,v,t)) &= D(v)f(\theta,v,t)\left( \dfrac{\mathcal B[g](v)}{D(v)} + \partial_v \log(D(v)f(\theta,v,t)) \right) \\
& = D(v)f(\theta,v,t)\left(\partial_v \log(D(v)f(\theta,v,t)) - \partial_v \log(D(v)f^\infty(\theta,v)) \right)
\end{split}
\]
In particular from \eqref{eq:cond_gcons} it follows that the Fokker-Planck equation \eqref{eq:FP_general} with constant background can be rewritten in Landau form
\[
\partial_t f(\theta,v,t) = \partial_v \left[ D(v)f(\theta,v,t) \partial_v \log \dfrac{f(\theta,v,t)}{f^\infty(\theta,v)} \right], 
\]
or equivalently in the non-logarithmic Landau form
\[
\partial_t f(\theta,v,t) = \partial_v \left[ D(v)f^\infty(\theta,v) \partial_v \dfrac{f(\theta,v,t)}{f^\infty(\theta,v)} \right].
\]
From these reformulations we can obtain the evolution for $F(\theta,v,t) = \frac{f(\theta,v,t)}{f^\infty(\theta,v)}$ since
\[
\begin{split}
\partial_t f(\theta,v,t) &= f^\infty(\theta,v)\partial_t F(\theta,v,t) \\
& = D(v)f^\infty(\theta,v)\partial_v^2 F(\theta,v,t) + \partial_v(D(v)f^\infty(\theta,v)) \partial_v F(\theta,v,t).
\end{split}\]
Therefore, from \eqref{eq:cond_gcons} we get
\begin{equation}\label{eq:F}
\partial_t F(\theta,v,t) = D(v)\partial_v^2 F(\theta,v,t) - \mathcal B[g](v)\partial_v F(\theta,v,t),
\end{equation}
with no-flux boundary conditions
\[
D(v)f^\infty(\theta,v) \partial_v F(\theta,v,t) \Big|_{v \in \partial V} = 0.
\]

In the following we summarise several findings on the entropy production of Fokker-Planck problems.  Their importance to detected the correct trends to equilibrium is essential in kinetic theory, we point the interested reader to \cite{ToscQA} for more details in the classic Fokker-Planck setting and to \cite{MJT} for results in the nonconstant diffusions setting. 

In the case of interaction with a constant background distribution the following result holds
\begin{thm}
Let the smooth function $\Phi(x)$, $x\in \RR_+$ be convex. Then, if $F(\theta,v,t)$ is the solution to \eqref{eq:F} in $V\subseteq\mathbb R$ and $ F(\theta,v,t)$ is bounded for all $\theta\in I_{\Theta}$ the functional 
\[
\mathcal H(f,f^\infty)(\theta,t) = \int_{V} f^\infty(\theta,v) \Phi(F(\theta,v,t))dv
\]
is monotonically decreasing in time and its evolution is given by 
\[
\dfrac{d}{dt} \mathcal H(f,f^\infty)(\theta,t)= -\mathcal{I}(f,f^\infty)(\theta,t),
\]
where with $\mathcal I$ we denote the nonnegative quantity
\[
\mathcal{I}(f,f^\infty)(\theta,t) = \int_V D(v)f^\infty(\theta,v) \Phi^{\prime\prime}(F(\theta,v,t))\left|\partial_v F(\theta,v,t) \right|^2 dv. 
\]
\end{thm}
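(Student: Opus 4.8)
The plan is to differentiate $\mathcal{H}$ in time under the integral sign, substitute the non-logarithmic Landau form of the equation derived just above the statement, integrate by parts once in $v$, and discard the boundary contribution by the no-flux condition. Concretely, since $f^\infty(\theta,v)$ is time-independent and $F(\theta,v,t)=f(\theta,v,t)/f^\infty(\theta,v)$, I would first write
\[
\frac{d}{dt}\mathcal{H}(f,f^\infty)(\theta,t) = \int_V f^\infty(\theta,v)\,\Phi'(F(\theta,v,t))\,\partial_t F(\theta,v,t)\,dv,
\]
the exchange of $\partial_t$ with the integral being legitimate because $F$ is assumed bounded (and smooth, by the regularity recalled at the start of Section~\ref{subsect:asymp}), so that the integrand and its time derivative are uniformly dominated on $V$. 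Then, using the non-logarithmic Landau reformulation $\partial_t f = \partial_v\big(D(v)f^\infty(\theta,v)\,\partial_v F\big)$, i.e. $f^\infty\partial_t F = \partial_v\big(D f^\infty \partial_v F\big)$, this becomes
\[
\frac{d}{dt}\mathcal{H}(f,f^\infty)(\theta,t) = \int_V \Phi'(F(\theta,v,t))\,\partial_v\big(D(v)f^\infty(\theta,v)\,\partial_v F(\theta,v,t)\big)\,dv.
\]

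Next I would integrate by parts in $v$: the boundary term $\big[\Phi'(F)\,D(v)f^\infty(\theta,v)\,\partial_v F\big]_{v\in\partial V}$ vanishes by the no-flux condition $D(v)f^\infty(\theta,v)\,\partial_v F|_{v\in\partial V}=0$, and the chain rule $\partial_v(\Phi'(F))=\Phi''(F)\,\partial_v F$ then yields
\[
\frac{d}{dt}\mathcal{H}(f,f^\infty)(\theta,t) = -\int_V D(v)\,f^\infty(\theta,v)\,\Phi''(F(\theta,v,t))\,|\partial_v F(\theta,v,t)|^2\,dv = -\mathcal{I}(f,f^\infty)(\theta,t).
\]
Finally, $\mathcal{I}\ge 0$ since $D(v)\ge 0$ (diffusion coefficient), $f^\infty(\theta,v)\ge 0$ (it is the positive equilibrium density \eqref{eq:quasi}), and $\Phi''\ge 0$ by convexity of $\Phi$; hence $\mathcal{H}$ is monotonically nonincreasing. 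The argument is pointwise in $\theta\in I_\Theta$.

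The only genuinely delicate point is the analytic justification of differentiation under the integral and of the integration by parts — that is, integrability of $D f^\infty\,\Phi''(F)\,|\partial_v F|^2$ over $V$ and the vanishing of the flux $D f^\infty\,\partial_v F$ at $\partial V$ — which is precisely where the hypothesis that $F$ is bounded, combined with the smoothness of the Fokker–Planck solution and the prescribed no-flux boundary data, is used. Everything else reduces to the chain rule and a single integration by parts.
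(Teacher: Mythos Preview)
Your proposal is correct and is precisely the standard argument the paper has in mind: the paper's own ``proof'' merely cites \cite{FPTT} and notes that the computation holds pointwise in $\theta\in I_\Theta$, which is exactly the differentiate--substitute the Landau form--integrate by parts--use convexity route you spell out. There is nothing to add.
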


\begin{proof}
The proof of this result follows the strategy adopted in \cite{FPTT} for all $\theta\in I_{\Theta}$. 
\end{proof}

Now in the case $\Phi(x) = x\log(x)$ we obtain the relative Shannon entropy $\mathcal H(f,f^\infty)(\theta,t)$ which is a functional depending on the uncertainties of the model. From the above result it follows that this quantity is dissipated with the rate given for all $\theta\in I_{\Theta}$ by 
\[
\begin{split}
\mathcal{I}_\mathcal{H}(f,f^\infty)(\theta,t) & = \int_V D(v) f^\infty(\theta,v) \dfrac{1}{F(\theta,v,t)}\left|\partial_v F(\theta,v,t) \right|^2 dv 
\end{split}
\]
and we have 
\[
\dfrac{d}{dt} \int_V f(\theta,v,t)\log \dfrac{f(\theta,v,t)}{f^\infty(\theta,v)} = -   \int_V D(v) f(\theta,v,t) \left(\dfrac{\partial_v f(\theta,v,t)}{f(\theta,v,t)}- \dfrac{\partial_v f^\infty(\theta,v)}{f^\infty(\theta,v)}\right)^2 dv
\]

In the stochastic Galerkin approximation the relative Shannon entropy for $f^M(\theta,v,t,)$ in \eqref{eq:fM} reads
\[\begin{split}
&\dfrac{d}{dt} \int_V \sum_{k=0}^M \hat f_k(v,t)\Phi_k(\theta)  \log\dfrac{\sum_{k=0}^M \hat f_k(v,t)\Phi_k(\theta)}{\sum_{k=0}^M \hat{f}^\infty_k(v)\Phi_k(\theta)}dv \\
&\qquad = -\int_V D(v) \sum_{k=0}^M\hat f_k(v,t)\Phi_k(\theta)\left( \partial_v \log\dfrac{ \sum_{k=0}^M\hat f_k(v,t)\Phi_k}{ \sum_{k=0}^M \hat{f}^\infty_k(v)\Phi_k(\theta)} \right)^2dv, 
\end{split}\]
from which approximated statistical moments can be obtained by projection in the space defined by the polynomial basis
\[
\begin{split}
\dfrac{d}{dt} \int_{V} \sum_{k=0}^M H_{hk}(v,t) \hat f_k(v,t)dv = - \int_VD(v) \sum_{k=0}^M I_{hk}(v,t) \hat f_k(v,t)dv,
\end{split}
\]
being 
\[\begin{split}
H_{hk} &= \int_{I_{\Theta}} \left(\log f^M(\theta,v,t) - \log f^{M,\infty}(\theta,v)\right)\Phi_h(\theta)d\theta,  \\
I_{hk}  &= \int_{I_{\Theta}} \left(\partial_v\log f^M(\theta,v,t) - \partial_v\log f^{M,\infty}(\theta,v)\right)^2\Phi_h(\theta)d\theta. 
\end{split}\]
We observe that, due to the nonlinearities in the definition of the convex functional $\mathcal H(f,f^\infty)$, a coupled system of differential equations must be solved to estimate the expected trends to equilibrium provided by the relative entropy functional. Nevertheless, at the Galerkin level we have no guarantee that the weighted Fisher information defines a positive quantity for the obtained truncated distribution and, hence, that the entropy monotonically decreases. \\

On the other hand, following the same computations presented in the beginning of this section, the system of $M+1$ projections defined in \eqref{eq:FP_h} can be rewritten for all $h=0,\dots,M$ in the case of fixed background as follows
\be\label{eq:Landauh}
\partial_t \hat f_h(v,t) = \partial_v \left[ D(v)\hat f_h(v,t) \partial_v \log \dfrac{\hat f_h(v,t)}{\hat{f}^\infty_h(v)} \right].
\ee
Therefore, by introducing the ratio $F_h = \frac{\hat f_h(v,t)}{\hat{f}^\infty_h(v)}>0$ we have
\be\label{eq:Fh}
\partial_t F_h = -\mathcal B[g](v) \partial_v F_h(v,t) + D(v) \partial_v^2 F_h(v,t).
\ee
complemented with no-flux boundary conditions. Hence, in analogy with what we discussed above,  the following result holds.
\begin{thm}
Let the smooth function $\Phi(x)$, $x\in \RR_+$ be convex. Then, if $F_h(v,t)$ is the solution to \eqref{eq:Fh} in $V\subseteq\mathbb R$ and $ F_h(v,t)$ is bounded the functional 
\[
{\mathcal{H}}(F_h)(t) = \int_{V} \hat{f}_h^\infty(v) \Phi(F_h(v,t))dv
\]
is monotonically decreasing in time and its evolution is given by 
\[
\dfrac{d}{dt} {\mathcal{H}}(F_h)(t)= -{\mathcal{I}}(F_h)(t),
\]
where with ${\mathcal I}$ we denote the nonnegative quantity
\[
{\mathcal{I}}(F_h)(t) = \int_V D(v)\hat{f}_h^\infty(v) \Phi^{\prime\prime}(F_h(v,t))\left|\partial_v F_h(v,t) \right|^2 dv. 
\]
\end{thm}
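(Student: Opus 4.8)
The plan is to repeat, mode by mode, the computation already used for the continuous functional $\mathcal{H}(f,f^\infty)(\theta,t)$. Equation \eqref{eq:Fh} for $F_h$ has exactly the structure of \eqref{eq:F} for $F(\theta,\cdot,t)$, with the fixed mode $\hat f_h^\infty(v)$ playing the role of the equilibrium $f^\infty(\theta,v)$ and the discrete index $h$ replacing the parameter $\theta$; consequently the argument of the previous Theorem (which follows \cite{FPTT}) transfers essentially verbatim.

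First I would rewrite \eqref{eq:Fh} in divergence (non-logarithmic Landau) form. Starting from \eqref{eq:Landauh} and using $\hat f_h = \hat f_h^\infty F_h$ together with $\partial_v\log(\hat f_h/\hat f_h^\infty)=\partial_v F_h/F_h$, one gets $D(v)\hat f_h\,\partial_v\log(\hat f_h/\hat f_h^\infty)=D(v)\hat f_h^\infty\,\partial_v F_h$, hence
\[
\hat f_h^\infty(v)\,\partial_t F_h(v,t) = \partial_v\!\left( D(v)\hat f_h^\infty(v)\,\partial_v F_h(v,t)\right),
\]
which is the per-mode analogue of the identity preceding \eqref{eq:F} and relies precisely on the stationary relation \eqref{eq:FP_h_stat} for $\hat f_h^\infty$.

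Next I would differentiate $\mathcal{H}(F_h)$ under the integral sign — legitimate since $F_h$ is bounded and smooth and $\hat f_h^\infty$ is time independent — to get $\frac{d}{dt}\mathcal{H}(F_h)(t)=\int_V \hat f_h^\infty(v)\,\Phi'(F_h)\,\partial_t F_h\,dv$. Inserting the divergence form above and integrating by parts once yields
\[
\frac{d}{dt}\mathcal{H}(F_h)(t) = \Big[\Phi'(F_h)D(v)\hat f_h^\infty(v)\,\partial_v F_h\Big]_{\partial V} - \int_V D(v)\hat f_h^\infty(v)\,\Phi''(F_h)\,|\partial_v F_h|^2\,dv.
\]
The boundary term vanishes thanks to the no-flux condition $D(v)\hat f_h^\infty(v)\partial_v F_h|_{v\in\partial V}=0$ associated with \eqref{eq:Fh}, leaving $\frac{d}{dt}\mathcal{H}(F_h)(t)=-\mathcal{I}(F_h)(t)$. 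Finally $\mathcal{I}(F_h)(t)\ge 0$ because $D(v)\ge 0$, $\hat f_h^\infty(v)\ge 0$ and $\Phi''\ge 0$ by convexity of $\Phi$, so $\mathcal{H}(F_h)$ is monotonically nonincreasing.

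The calculation is routine; the only step that deserves care is the reduction to divergence form, i.e. verifying that the stationary identity \eqref{eq:FP_h_stat} is exactly what collapses the first-order transport term of \eqref{eq:Fh} into the self-adjoint operator $\partial_v(D\hat f_h^\infty\,\partial_v\,\cdot)$, which is the mechanism that already works at the continuous level and is covered by the reference to \cite{FPTT}. A secondary subtlety is that positivity of $\mathcal{I}(F_h)$, as well as the very definition $F_h=\hat f_h/\hat f_h^\infty$, genuinely requires $\hat f_h^\infty(v)>0$; this holds for the expectation mode $h=0$ but need not hold for the higher modes when the initial mass is uncertain, which is presumably why the sharp entropy statement is phrased mode-wise rather than for the truncated density $f^M$.
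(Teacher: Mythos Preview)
Your proposal is correct and matches the paper's approach: the paper does not give a separate proof for this theorem but simply states that it holds ``in analogy with what we discussed above,'' referring back to the previous theorem on $F(\theta,v,t)$, whose proof in turn is delegated to \cite{FPTT}. You have spelled out precisely that analogy --- reducing \eqref{eq:Fh} to the self-adjoint form $\hat f_h^\infty\partial_t F_h=\partial_v(D\hat f_h^\infty\partial_v F_h)$ via \eqref{eq:FP_h_stat}, then integrating by parts against $\Phi'(F_h)$ --- which is exactly the computation of \cite{FPTT} transplanted to the $h$-th mode. Your closing remark on the need for $\hat f_h^\infty>0$ is a pertinent caveat that the paper does not make explicit.
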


Now, in the case of relative Shannon entropy $\Phi(x)=x\log x$ we obtain in each polynomial space
\[
\dfrac{d}{dt} \int_V \hat f_h(v,t) \log \dfrac{\hat f_h(v,t)}{\hat{f}^\infty_h(v)}dv = - \int_V D(v)\hat f_h(v,t)\left(\partial_v \log \dfrac{\hat f_h(v,t)}{\hat{f}^\infty_h(v)} \right)^2dv.
\]
Therefore, each projection of $f(\theta,v,t)$ in the linear space of arbitrary degree $h=0,\dots,M$ converges monotonically in time to its equilibrium $\hat f^{\infty}_h(v)$. In particular this is true for the expected quantities of the problem.



\section{Structure preserving methods}\label{sect:structure}

In this section we introduce the class of so-called structure preserving (SP) numerical methods for the solution of Fokker-Planck equations with nonlocal terms. These methods preserve the fundamental structural properties of the problem like nonnegativity of the solution, entropy dissipation and capture the steady state of each problem with arbitrarily accuracy, see \cite{CCP,DPZ,PZ1,PZ2}. The applications of the SP methods is here particularly appropriate since, thanks to background interactions, the system of $M+1$ equations \eqref{eq:FP_h} is decoupled. 

In the following we summarise the construction ideas at the basis of SP methods in dimension $d=1$, extension to general dimension can be found in \cite{PZ1}. The case of nonlocal Fokker-Planck equations with anisotropic diffusion has been studied in \cite{LZ}.

\subsection{Derivation of the SP method}
For all $h=0,\dots,M$ we may rewrite \eqref{eq:FP_h} in flux form as follows
\[
\partial_t \hat f_h(v,t) = \partial_v \mathcal F[\hat f_h](v,t),
\]
where
\[
\mathcal F[\hat f_h](v,t) = \mathcal C[g](v,t)\hat f_h(v,t) + D(v)\partial_v \hat f_h(v,t),
\]
and $\mathcal C[g](v,t) = \mathcal B[g](v,t) + \partial_v D(v)$. Let us introduce a uniform grid $v_i \in V$, such that $v_{i+1}-v_i= \Delta v>0$ and let $v_{i\pm/2} = v_i \pm \Delta v/2$. We consider the conservative discretization 
\be\label{eq:semidiscrete}
\dfrac{d}{dt} \hat f_{h,i}(t) = \dfrac{\mathcal F_{h,i+1/2}(t)-\mathcal F_{h,i-1/2}(t) }{\Delta v},\qquad t\ge 0
\ee
where $\hat{f}_{h,i}(t)$ is an approximation of $\hat{f}_h(v_i,t)$ and where $\mathcal F_{h,i\pm1/2}$ a numerical flux having the form 
\be\label{eq:Fluxh}
\mathcal F_{h,i+1/2}(t) = \tilde{\mathcal C}[g]_{i+1/2}(t) \tilde{f}_{h,i+1/2}(t) + D_{i+1/2} \dfrac{\hat f_{h,i+1}(t)-\hat f_{h,i}(t)}{\Delta v},
\ee
where 
\[
\tilde f_{h,i+1/2}(t) = (1-\delta_{i+1/2}(t))\hat f_{h,i+1}(t) + \delta_{i+1/2}(t)\hat f_{h,i}(t).
\]
Hence, we aim at finding the weight functions $\delta_{i+1/2}$ and $\tilde{\mathcal C}[g]_{i+1/2}$ such that the scheme produces nonnegative solutions without restrictions on the mesh size $\Delta v$, and is able to capture with arbitrary accuracy the steady state of the \eqref{eq:FP_h} for all $h=0\dots,M$. 

We observe that for a vanishing numerical flux from \eqref{eq:Fluxh} we obtain
\be\label{eq:numratio}
\dfrac{\hat f_{h,i+1}(t)}{\hat f_{h,i}(t)} = \dfrac{-\delta_{i+1/2}(t) \tilde{\mathcal C}_{i+1/2}(t) + \dfrac{D_{i+1/2}}{\Delta v}}{(1-\delta_{i+1/2}(t))\tilde{\mathcal C}_{i+1/2}(t) + \dfrac{D_{i+1/2}}{\Delta v}}.
\ee
At the analytical level we obtained from \eqref{eq:FP_h_stat} in Section \ref{subsect:asymp} that 
\[
D(v)\partial_v \hat f_h(v,t) = -(\mathcal B[g](v,t) + \partial_v D(v))\hat f_h(v,t) ,
\]
which admits the quasi-steady state approximation in the cell $[v_i,v_{i+1}]$ for all $h = 0,\dots,M$
\[
\int_{v_i}^{v_{i+1}} \dfrac{1}{\hat f_h(v,t)}\partial_v \hat f_h(v,t)dv = -\int_{v_i}^{v_{i+1}} \dfrac{1}{D(v)}(\mathcal B[g](v,t) + \partial_v D(v))dv,
\]
that is
\be\label{eq:ratio}
\dfrac{\hat f_h(v_{i+1},t)}{\hat f_h(v_i,t)} = \exp\left\{ - \int_{v_i}^{v_{i+1}}\dfrac{1}{D(v)}(\mathcal B[g](v,t) + \partial_v D(v))dv \right\}.
\ee
Now equating $\hat f_h(v_{i+1},t)/\hat f_h(v_i,t)$ in \eqref{eq:ratio} and $\hat f_{h,i+1}(t)/\hat{f}_{h,i}(t)$ in \eqref{eq:numratio} and setting 
\[
\tilde{\mathcal C}_{i+1/2}(t) = \dfrac{D_{i+1/2}}{\Delta v} \int_{v_i}^{v_{i+1}}\dfrac{1}{D(v)}(\mathcal B[g](v,t) + \partial_v D(v))dv,
\]
we can determine weight functions 
\be\label{eq:weights}
\delta_{i+1/2}(t) = \dfrac{1}{\lambda_{i+1/2}(t)} + \dfrac{1}{1-\exp(\lambda_{i+1/2}(t))}\in(0,1), 
\ee
where
\[
\lambda_{i+1/2}(t) = \int_{v_i}^{v_{i+1}}\dfrac{1}{D(v)}(\mathcal B[g](v,t) + \partial_v D(v))dv = \dfrac{\Delta v \tilde{\mathcal C}_{i+1/2}(t)}{D_{i+1/2}}. 
\]

It is worth pointing out that, by construction, the numerical flux of the SP scheme vanishes when the analytical flux is equal to zero. Furthermore, the long time behavior of \eqref{eq:FP_h_stat} is described with the accuracy with which we evaluate the weights \eqref{eq:weights}. At the numerical level this fact reflects on the considered quadrature methods as we will observe in Section \ref{sect:num}. In the following, we will show that suitable restrictions on the time discretization can be defined to guarantee positivity preservation of the SP scheme, without any restriction on the mesh for $v\in V$. Moreover, we will show that the scheme dissipates the numerical entropy with a rate which is coherent with what we observed in Section \ref{subsect:asymp}. 

\begin{rem}\label{rem2}
The obtained weights do not depend on the degree of the linear space since they are equal for all $h=0,\dots,M$. Furthermore, in the case of interaction with a constant background, i.e. $\mathcal B[g](v,t) = \mathcal B[g](v)$, a stationary state $\hat{f}^{\infty}_h(v)$ is defined for all $h=0\dots,M$, see equation \eqref{eq:FP_h_stat} together with boundary conditions. Hence, thanks to the knowledge of the stationary state in each polynomial space we have
\[
\dfrac{\hat f^\infty_{h,i+1}}{\hat f^\infty_{h,i}} = \exp\left\{ - \int_{v_i}^{v_{i+1}}\dfrac{1}{D(v)}(\mathcal B[g](v) + \partial_v D(v))dv \right\} = \exp\left( -\lambda_{i+1/2}^{\infty}\right).
\]
Which leads to 
\[
\lambda_{i+1/2}^{\infty} = \log\left(\dfrac{\hat f_{h,i}^{\infty}}{\hat f_{h,i+1}^{\infty}} \right),
\]
and 
\[
\delta_{i+1/2}^{\infty} = \dfrac{1}{\log(\hat{f}_{h,i}^\infty)-\log(\hat{f}_{h,i+1}^\infty)} + \dfrac{\hat{f}_{h,i+1}^\infty}{\hat{f}_{h,i+1}^\infty-\hat{f}_{h,i}^\infty}.
\]
We highlight how in this case the dependence on $h=0,\dots,M$ is only apparent since for each times $t\ge 0$ the ratio $\hat f_{h,i+1}/\hat f_{h,i}$ in \eqref{eq:ratio} does not depend on the specific projection thanks to background interactions.
In addition, note that if the steady state is analytically known, for some special form of the operator $\mathcal B[\cdot]$ and of the diffusion function, the SP scheme does not introduce any additional source of errors on the steady state distribution, see \cite{CCP,PZ1}.
\end{rem}

\subsection{Positivity of statistical moments}
In general positivity of the solution, or of its statistical moments, is not achievable once we apply  stochastic Galerkin methods and the solution of the system $f^M(\theta,v,t)$ looses a genuine physical meaning. In this section we provide explicit conditions to preserve nonnegativity of projections  $\hat f_h(v,t)$ and, therefore, of the statistical moments of $f^M(\theta,v,t)$, that have been obtained in Section \ref{sect:2} from direct inspection of the Galerkin projections \eqref{eq:expected}-\eqref{eq:var}. In particular, we will show how in the background interactions case we are able to provide reliable conditions, without restriction on $\Delta v$, for positivity preservation. 

In recent works \cite{CZ,CPZ} a particle scheme has been proposed to enforce positivity of statistical quantities for uncertainty quantification of kinetic models. The core idea of the approach presented in the cited works is to approximate the expected solution of a mean-field type model by its Monte Carlo (MC) formulation  in the phase space, which is then expanded through a SG generalized polynomial chaos (SG-gPC) method. The expected solution is then reconstructed from expected positions and velocities of the microscopic system, which is considered in the gPC setting.  The authors of these works refer to this method as MCgPC. The solution of the MCgPC approach is still spectrally accurate in the random space whereas in the phase space it assumes to accuracy of the Monte Carlo method. The approach presented in the present manuscript for the linear case provide instead global high accuracy. 

Let us introduce the time discretization $t^n = n\Delta t$, $\Delta t>0$ and $n = 0,\dots,T$ and consider the following forward Euler method for all $h=0,\dots,M$
\begin{equation}\label{eq:explicit}
\hat f_{h,i}^{n+1} = \hat f_{h,i}^{n} + \Delta t \dfrac{\mathcal F_{h,i+1/2}^n-\mathcal F_{h,i-1/2}^n}{\Delta v},
\end{equation}
where now $\hat f_{h,i}^n $ is an approximation of $\hat{f}_{h}(v_i,t^n)$ and the flux has the form introduced in \eqref{eq:Fluxh}. We can prove the following result
\begin{thm}\label{thm:explicit_positivity}
Under the time step restriction 
\[
\Delta t\le \dfrac{\Delta v^2}{2\left(M \Delta v  + D \right)}, \qquad M = \max_i | \tilde{\mathcal C}_{i+1/2}^n|, \quad D = \max_i D_{i+1/2}
\]
the explicit scheme \eqref{eq:explicit} preserves nonnegativity, i.e. $\hat f_{h,i}^{n+1}\ge 0$ provided $\hat f_{h,i}^n\ge 0$. 
\end{thm}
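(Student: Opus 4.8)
The plan is to expand the one–step map \eqref{eq:explicit} with the flux \eqref{eq:Fluxh} into an explicit three–point formula
\[
\hat f_{h,i}^{n+1} \;=\; \frac{\Delta t}{\Delta v}\,a_{i+1/2}^{+}\,\hat f_{h,i+1}^{n}\;+\;\Bigl(1-\tfrac{\Delta t}{\Delta v}\,\Theta_i\Bigr)\hat f_{h,i}^{n}\;-\;\frac{\Delta t}{\Delta v}\,a_{i-1/2}^{-}\,\hat f_{h,i-1}^{n},
\]
and then to show that, under the stated restriction on $\Delta t$, each of the three coefficients is nonnegative; since $\hat f_{h,i}^{n}\ge 0$ by assumption, this immediately gives $\hat f_{h,i}^{n+1}\ge 0$. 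Reading off the coefficients in the flux \eqref{eq:Fluxh}, the weight of $\hat f_{h,i+1}^{n}$ in $\mathcal F_{h,i+1/2}^{n}$ is $a_{i+1/2}^{+}=(1-\delta_{i+1/2})\tilde{\mathcal C}_{i+1/2}+D_{i+1/2}/\Delta v$, i.e. precisely the denominator appearing in \eqref{eq:numratio}, the weight of $\hat f_{h,i}^{n}$ in $\mathcal F_{h,i+1/2}^{n}$ is $a_{i+1/2}^{-}=\delta_{i+1/2}\tilde{\mathcal C}_{i+1/2}-D_{i+1/2}/\Delta v$, i.e. minus the numerator in \eqref{eq:numratio}, and collecting the contributions of $\mathcal F_{h,i+1/2}^{n}$ and $\mathcal F_{h,i-1/2}^{n}$ produces the above formula with $\Theta_i=a_{i-1/2}^{+}-a_{i+1/2}^{-}$. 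By Remark \ref{rem2} the weights $\delta_{i+1/2}$, hence all the $a_{i\pm 1/2}^{\pm}$, are independent of $h$, so it suffices to argue for a single index $h$.

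The decisive step is the sign of the off–diagonal coefficients, and here the explicit form \eqref{eq:weights} of $\delta_{i+1/2}$ is essential. Substituting $\delta_{i+1/2}=1/\lambda_{i+1/2}+1/(1-e^{\lambda_{i+1/2}})$ together with $\tilde{\mathcal C}_{i+1/2}=D_{i+1/2}\,\lambda_{i+1/2}/\Delta v$, an elementary computation collapses these weights to
\[
a_{i+1/2}^{+}=\frac{D_{i+1/2}}{\Delta v}\,\frac{\lambda_{i+1/2}\,e^{\lambda_{i+1/2}}}{e^{\lambda_{i+1/2}}-1},\qquad -\,a_{i+1/2}^{-}=\frac{D_{i+1/2}}{\Delta v}\,\frac{\lambda_{i+1/2}}{e^{\lambda_{i+1/2}}-1}.
\]
Since $x\mapsto x/(e^{x}-1)$ is strictly positive on $\mathbb R\setminus\{0\}$ and extends continuously with value $1$ at $x=0$, and $D_{i+1/2}>0$, we get $a_{i+1/2}^{+}>0$ and $a_{i-1/2}^{-}<0$ for every $i$; equivalently $\tilde{\mathcal C}_{i+1/2}\delta_{i+1/2}\le D_{i+1/2}/\Delta v$ and $\tilde{\mathcal C}_{i+1/2}(1-\delta_{i+1/2})\ge -D_{i+1/2}/\Delta v$. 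Thus the coefficients of $\hat f_{h,i+1}^{n}$ and of $\hat f_{h,i-1}^{n}$ are nonnegative with \emph{no} restriction on $\Delta v$ or $\Delta t$: this is the structural feature built into the SP flux.

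It then remains to treat the diagonal coefficient $1-(\Delta t/\Delta v)\Theta_i$ with $\Theta_i=a_{i-1/2}^{+}+(-a_{i+1/2}^{-})$, which is the only place where a time–step condition enters. Using that $\delta_{i+1/2},\,1-\delta_{i+1/2}\in(0,1)$, recorded in \eqref{eq:weights}, one bounds $a_{i-1/2}^{+}=\tilde{\mathcal C}_{i-1/2}(1-\delta_{i-1/2})+D_{i-1/2}/\Delta v\le M+D/\Delta v$ and $-a_{i+1/2}^{-}=D_{i+1/2}/\Delta v-\tilde{\mathcal C}_{i+1/2}\delta_{i+1/2}\le D/\Delta v+M$, whence $\Theta_i\le 2\bigl(M+D/\Delta v\bigr)=2(M\Delta v+D)/\Delta v$. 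Therefore $1-(\Delta t/\Delta v)\Theta_i\ge 1-2\Delta t(M\Delta v+D)/\Delta v^{2}$, which is nonnegative exactly when $\Delta t\le \Delta v^{2}/\bigl(2(M\Delta v+D)\bigr)$ (and if $\Theta_i\le 0$ for some $i$ that coefficient is $\ge 1$ regardless); the boundary cells are handled identically, the no–flux prescription removing the spurious incoming contributions.

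I expect the only genuine obstacle to be the sign analysis of $a_{i\pm 1/2}^{\pm}$ in the second paragraph: one has to exploit the closed form of $\delta_{i+1/2}$ to recognise that these quantities are $D_{i\pm 1/2}/\Delta v$ times values of the positive function $x/(e^{x}-1)$ (and $e^{x}\,x/(e^{x}-1)$). Once that identity is in place, the remaining estimates are the crude bounds above, which is precisely what makes the resulting time–step restriction depend on $\Delta v$ only through the diagonal term and not through the transport weights.
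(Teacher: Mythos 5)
Your proof is correct, and it is essentially the argument the paper relies on: the paper simply defers to the explicit-scheme positivity proof of \cite{PZ1}, which is exactly this convex-combination expansion of the update, the identification $a^{+}_{i+1/2}=\frac{D_{i+1/2}}{\Delta v}\frac{\lambda e^{\lambda}}{e^{\lambda}-1}$, $-a^{-}_{i+1/2}=\frac{D_{i+1/2}}{\Delta v}\frac{\lambda}{e^{\lambda}-1}$ ensuring nonnegative off-diagonal weights, and the parabolic bound on the diagonal term yielding $\Delta t\le \Delta v^{2}/\bigl(2(M\Delta v+D)\bigr)$. No gaps; the independence of the weights from $h$ is correctly noted, matching the paper's observation that the restriction does not depend on the expansion degree.
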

\begin{proof}
The proof of this result is analogous for all $h=0,\dots,M$ to the result for explicit scheme obtained  in \cite{PZ1}. 
\end{proof}
We observe that no explicit dependence on the expansion degree $h = 0,\dots,M$ appears in the derived restriction thanks to the background-type interactions. Furthermore, the restriction on $\Delta t$ in Theorem \ref{thm:explicit_positivity} ensures nonnegativity without additional bounds on the spatial grid as for example happen for central type schemes. The derived condition automatically holds for higher order strong stability preserving (SSP) methods like Runge-Kutta and multistep methods since these are convex combinations of the forward Euler integration. {The proved} nonnegativity of the scheme is extended straightforwardly to each SSP-type time integration. 

We highlight how the derived parabolic restriction to enforce nonnegativity of explicit schemes can be quite heavy for practical applications. A convenient strategy to lighten this burden resorts to the technology of semi-implicit methods, see \cite{BFR} for an introduction. Indeed, we can prove nonnegativity of the numerical approximations of solutions $\{\hat f_h\}_{h=0}^M$ by considering the set of modified fluxes
\be\label{eq:flux_SI}
\tilde{ \mathcal F}_{h,i+1/2}^{n+1} = \tilde{\mathcal C}_{i+1/2}^n\left[(1-\delta_{i+1/2}^n)\hat f_{h,i+1}^{n+1} + \delta_{i+1/2}^n\hat f_{h,i}^{n+1} \right] + D_{i+1/2}\dfrac{\hat f_{h,i+1}^{n+1}-\hat f_{h,i}^{n+1}}{\Delta v}.
\ee
The scheme is semi-implicit since we compute the background dependent $\tilde{\mathcal C}_{i+1/2}(t)$ and weight functions $\delta_{i+1/2}(t)$ at time $t^n$. As a consequence, it is easily seen how in the case of a fixed background the scheme is coherent with a fully implicit method. 

The following result holds
\begin{prop}
Let us consider a semi-implicit method for all $h=0,\dots,M$
\[
\hat f_{h,i}^{n+1} = \hat f_{h,i}  + \Delta t \dfrac{\tilde{\mathcal F}_{i+1/2}^{n+1}-\tilde{\mathcal F}_{i-1/2}^{n+1}}{\Delta v}, 
\]
with fluxes defined in \eqref{eq:flux_SI}. Under the time step restriction 
\[
\Delta t< \dfrac{\Delta v}{2M},\qquad M = \max_i |\tilde{\mathcal C}_{i+1/2}^n|,
\]
the semi-implicit scheme preserves nonneagivity, i.e. $\hat f_{h,i}^{n+1}\ge 0$ if $\hat f_{h,i}^n\ge 0$ for all $i=1,\dots,N$ and $h=0,\dots,M$.
\end{prop}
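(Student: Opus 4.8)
The plan is to recast the semi-implicit update as a linear system for the vector of nodal values $\hat{\mathbf f}_h^{n+1}=(\hat f_{h,1}^{n+1},\dots,\hat f_{h,N}^{n+1})$, namely $\mathbf A\,\hat{\mathbf f}_h^{n+1}=\hat{\mathbf f}_h^{n}$, and then to show that $\mathbf A$ is a nonsingular M-matrix; this yields $\mathbf A^{-1}\ge 0$ componentwise, so that $\hat{\mathbf f}_h^{n}\ge 0$ forces $\hat{\mathbf f}_h^{n+1}=\mathbf A^{-1}\hat{\mathbf f}_h^{n}\ge 0$. By Remark \ref{rem2} the weights $\delta_{i+1/2}^n$, the transport coefficients $\tilde{\mathcal C}_{i+1/2}^n$ and the diffusion coefficients $D_{i+1/2}$ do not depend on $h$, so the same matrix $\mathbf A$ governs every component $h=0,\dots,M$, which explains the absence of $h$ from the time-step restriction.

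First I would rewrite the flux \eqref{eq:flux_SI} in the two-point form $\tilde{\mathcal F}_{h,i+1/2}^{n+1}=-a_{i+1/2}\hat f_{h,i}^{n+1}+b_{i+1/2}\hat f_{h,i+1}^{n+1}$, with
\[
a_{i+1/2}=\frac{D_{i+1/2}}{\Delta v}-\tilde{\mathcal C}_{i+1/2}^n\,\delta_{i+1/2}^n,\qquad b_{i+1/2}=\frac{D_{i+1/2}}{\Delta v}+\tilde{\mathcal C}_{i+1/2}^n\bigl(1-\delta_{i+1/2}^n\bigr).
\]
Inserting the explicit weights \eqref{eq:weights} and using $\Delta v\,\tilde{\mathcal C}_{i+1/2}^n=\lambda_{i+1/2}^n D_{i+1/2}$ reduces these to
\[
a_{i+1/2}=\frac{D_{i+1/2}}{\Delta v}\cdot\frac{\lambda_{i+1/2}^n}{e^{\lambda_{i+1/2}^n}-1},\qquad b_{i+1/2}=\frac{D_{i+1/2}}{\Delta v}\cdot\frac{\lambda_{i+1/2}^n e^{\lambda_{i+1/2}^n}}{e^{\lambda_{i+1/2}^n}-1}.
\]
Since $s\mapsto s/(e^{s}-1)$ is strictly positive on $\RR$ (with limit $1$ at $s=0$), both coefficients are strictly positive, and from these expressions one reads off the key identity $b_{i+1/2}-a_{i+1/2}=D_{i+1/2}\lambda_{i+1/2}^n/\Delta v=\tilde{\mathcal C}_{i+1/2}^n$.

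Substituting into the scheme and collecting terms, the $i$-th row reads
\[
\Bigl(1+\tfrac{\Delta t}{\Delta v}(a_{i+1/2}+b_{i-1/2})\Bigr)\hat f_{h,i}^{n+1}-\tfrac{\Delta t}{\Delta v}\,a_{i-1/2}\hat f_{h,i-1}^{n+1}-\tfrac{\Delta t}{\Delta v}\,b_{i+1/2}\hat f_{h,i+1}^{n+1}=\hat f_{h,i}^{n},
\]
the no-flux conditions $\tilde{\mathcal F}_{h,1/2}^{n+1}=\tilde{\mathcal F}_{h,N+1/2}^{n+1}=0$ removing the first sub-diagonal and the last super-diagonal term. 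Hence $\mathbf A$ is tridiagonal with strictly positive diagonal and nonpositive off-diagonal entries, i.e.\ an $L$-matrix. It then remains to verify strict diagonal dominance: using $b_{i+1/2}-a_{i+1/2}=\tilde{\mathcal C}_{i+1/2}^n$, an interior row gives
\[
\mathbf A_{ii}-|\mathbf A_{i,i-1}|-|\mathbf A_{i,i+1}|=1+\tfrac{\Delta t}{\Delta v}(a_{i+1/2}-b_{i+1/2})+\tfrac{\Delta t}{\Delta v}(b_{i-1/2}-a_{i-1/2})=1-\tfrac{\Delta t}{\Delta v}\bigl(\tilde{\mathcal C}_{i+1/2}^n-\tilde{\mathcal C}_{i-1/2}^n\bigr),
\]
which, since $|\tilde{\mathcal C}_{i\pm1/2}^n|\le M$, is at least $1-2M\Delta t/\Delta v>0$ precisely under the hypothesis $\Delta t<\Delta v/(2M)$; the two boundary rows give the milder requirement $1-M\Delta t/\Delta v>0$. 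Thus $\mathbf A$ is a strictly diagonally dominant $L$-matrix with positive diagonal, hence a nonsingular M-matrix with nonnegative inverse, and nonnegativity propagates from $\hat{\mathbf f}_h^{n}$ to $\hat{\mathbf f}_h^{n+1}$, simultaneously for all $h=0,\dots,M$.

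The main obstacle — indeed essentially the only non-routine step — is the algebraic reduction of $a_{i+1/2}$ and $b_{i+1/2}$ from the weight formula \eqref{eq:weights}: it is this computation that simultaneously produces the positivity of these coefficients and the telescoping identity $b_{i+1/2}-a_{i+1/2}=\tilde{\mathcal C}_{i+1/2}^n$ on which the diagonal-dominance estimate rests. Everything afterwards is the classical M-matrix argument, and the overall structure parallels the semi-implicit analysis carried out in \cite{PZ1,PZ2}.
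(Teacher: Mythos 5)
Your proof is correct: the reduction of the flux coefficients through the weights \eqref{eq:weights} giving $a_{i+1/2},b_{i+1/2}>0$ and $b_{i+1/2}-a_{i+1/2}=\tilde{\mathcal C}_{i+1/2}^n$, followed by the strict diagonal dominance and M-matrix (monotonicity) conclusion, is exactly the intended argument. The paper itself gives no details, deferring to the semi-implicit positivity result of \cite{PZ1}, and your proposal reproduces that same approach, applied verbatim to each projection $h=0,\dots,M$ since the coefficients are $h$-independent.
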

\begin{proof}
The proof of this result is analogous for all $h=0,\dots,M$ to the result for semi-implicit scheme obtained  in \cite{PZ1}. 
\end{proof}

Extensions to higher order semi-implicit schemes have been obtained in \cite{BFR}. 


\subsection{Entropy dissipation}

{In Section \ref{subsect:asymp} we have observe how an entropy can be defined for the introduced class of problems. In details, a Shannon entropy operator has been defined and we proved that it is dissipated in time, leading to interpret the steady state as the zero entropy state of the system.  At the numerical level, a remarkable property of a scheme relies on the correct description of the trends toward equilibrium. In the following we will show how the introduced methods are entropic. }

We concentrate on the case of fixed background. In Section \ref{subsect:asymp} we have seen how the Fokker-Planck problems of interest can be rewritten in Landau form \eqref{eq:Landauh}. In particular, it can be proven how the numerical flux for this reformulation is given by the following equivalent form
\be\label{eq:Fluxh_re}
\mathcal F_{h,i+1/2}(t) = \dfrac{D_{i+1/2}}{\Delta v} \bar{f}_{h,i+1/2}^\infty\left( \dfrac{\hat f_{h,i+1}(t)}{\hat f_{h,i+1}^\infty} -\dfrac{\hat f_{h,i}(t)}{\hat f_{h,i}^\infty} \right),
\ee
with
\[
 \bar{f}_{h,i+1/2}^\infty = \dfrac{\hat f_{h,i+1}^\infty \hat f_{h,i}^\infty}{\hat f_{h,i+1}^\infty-\hat f_{h,i}^\infty} \log \left(\dfrac{\hat f_{h,i+1}^\infty}{\hat f_{h,i}^\infty} \right),
\]
since for all $h=0,\dots,M$ {we are looking at the constant background case} we have $\lambda_{i+1/2}^\infty = \log \hat f_{h,i}^\infty - \log f_{h,i+1}^\infty$ and the weight functions are rewritten as 
\[
\delta_{i+1/2}^\infty = \dfrac{1}{\log \hat f_{h,i}^\infty - \log f_{h,i+1}^\infty} + \dfrac{\hat f_{h,i+1}^\infty}{ \hat f_{h,i+1}^\infty -  f_{h,i}^\infty},
\]
see Remark \ref{rem2}. We can prove the following result
\begin{thm}
Let us consider the conservative discretization \eqref{eq:semidiscrete} for all $t\ge 0$ and $h=0\dots,M$. The numerical flux \eqref{eq:Fluxh} satisfies the discrete entropy dissipation 
\[
\dfrac{d}{dt} \mathcal H_{\Delta v} (\hat{f}_h,\hat{f}^\infty_h)(t)= -\mathcal I_{\Delta}(\hat{f}_h,\hat{f}^\infty_h)(t),
\]
where
\[
\mathcal H_{\Delta v} (\hat f_h,\hat{f}^\infty_h)(t) = \Delta v \sum_{i=0}^N \hat f_{h,i}(t) \log\left( \dfrac{\hat f_{h,i}(t)}{\hat f_{h,i}^\infty}\right),
\]
and 
\[
\mathcal I_{\Delta v}(\hat f_{h},\hat{f}^\infty_{h})(t)= \sum_{i=0}^N \left[ \log\left( \dfrac{\hat f_{h,i+1}(t)}{\hat{f}_{h,i+1}^\infty} \right) - \left(\dfrac{\hat f_{h,i}(t)}{\hat{f}_{h,i}^\infty}\right) \right] \cdot \left( \dfrac{\hat f_{h,i+1}(t)}{\hat{f}_{h,i+1}^\infty} - \dfrac{\hat f_{h,i}(t)}{\hat{f}_{h,i}^\infty} \right) \bar{f}_{h,i+1/2}^\infty D_{i+1/2} \ge 0.
\]
\end{thm}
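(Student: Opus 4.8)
The plan is to differentiate the discrete relative entropy directly in time, using that the discrete equilibrium $\hat f_{h,i}^\infty$ does not depend on $t$. This gives
\[
\frac{d}{dt}\mathcal{H}_{\Delta v}(\hat f_h,\hat f_h^\infty)(t) = \Delta v\sum_{i=0}^N \frac{d}{dt}\hat f_{h,i}(t)\left[\log\left(\frac{\hat f_{h,i}(t)}{\hat f_{h,i}^\infty}\right)+1\right].
\]
The contribution of the additive constant $1$ is $\Delta v\sum_i \frac{d}{dt}\hat f_{h,i}$, i.e. the time derivative of the total discrete mass; by the conservative form \eqref{eq:semidiscrete} it equals $\sum_i(\mathcal F_{h,i+1/2}-\mathcal F_{h,i-1/2})$, a telescoping sum that reduces to the boundary fluxes and hence vanishes under the no-flux boundary conditions. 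Only the logarithmic term survives.

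Next I would insert \eqref{eq:semidiscrete} into the surviving sum and perform a discrete summation by parts, again discarding boundary terms because the numerical flux vanishes on $\partial V$, which yields
\[
\frac{d}{dt}\mathcal{H}_{\Delta v}(\hat f_h,\hat f_h^\infty)(t) = \sum_{i=0}^N(\mathcal F_{h,i+1/2}-\mathcal F_{h,i-1/2})\log\left(\frac{\hat f_{h,i}}{\hat f_{h,i}^\infty}\right) = -\sum_{i=0}^N\mathcal F_{h,i+1/2}\left[\log\left(\frac{\hat f_{h,i+1}}{\hat f_{h,i+1}^\infty}\right)-\log\left(\frac{\hat f_{h,i}}{\hat f_{h,i}^\infty}\right)\right].
\]
Then I would substitute the equivalent non-logarithmic Landau-type expression \eqref{eq:Fluxh_re} for the flux, valid in the fixed-background case, which recasts the right-hand side as
\[
-\sum_{i=0}^N D_{i+1/2}\,\bar{f}_{h,i+1/2}^\infty\left(\frac{\hat f_{h,i+1}}{\hat f_{h,i+1}^\infty}-\frac{\hat f_{h,i}}{\hat f_{h,i}^\infty}\right)\left[\log\left(\frac{\hat f_{h,i+1}}{\hat f_{h,i+1}^\infty}\right)-\log\left(\frac{\hat f_{h,i}}{\hat f_{h,i}^\infty}\right)\right] = -\mathcal I_{\Delta v}(\hat f_h,\hat f_h^\infty)(t),
\]
which is precisely the claimed identity.

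It then remains to verify that $\mathcal I_{\Delta v}\ge 0$, which I would do termwise: $D_{i+1/2}>0$ by assumption; the coefficient $\bar{f}_{h,i+1/2}^\infty$ is strictly positive because its numerator $\hat f_{h,i}^\infty\hat f_{h,i+1}^\infty\log(\hat f_{h,i+1}^\infty/\hat f_{h,i}^\infty)$ and its denominator $\hat f_{h,i+1}^\infty-\hat f_{h,i}^\infty$ carry the same sign (equivalently, it is built from the logarithmic mean of the positive equilibrium values); and the elementary monotonicity inequality $(a-b)(\log a-\log b)\ge 0$, applied with $a=\hat f_{h,i+1}/\hat f_{h,i+1}^\infty$ and $b=\hat f_{h,i}/\hat f_{h,i}^\infty$, makes every summand nonnegative. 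Hence $\mathcal I_{\Delta v}\ge 0$ and $\mathcal{H}_{\Delta v}$ is nonincreasing in time.

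I expect the main obstacle to be the preliminary algebraic identity invoked above but not spelled out: that the SP flux \eqref{eq:Fluxh} with the weights \eqref{eq:weights} coincides, for a constant background, with the form \eqref{eq:Fluxh_re}. This rests on $\lambda_{i+1/2}^\infty=\log\hat f_{h,i}^\infty-\log\hat f_{h,i+1}^\infty$ and the associated expression for $\delta_{i+1/2}^\infty$ from Remark \ref{rem2}, together with some care at the removable singularity $\hat f_{h,i}^\infty=\hat f_{h,i+1}^\infty$, where $\bar{f}_{h,i+1/2}^\infty$ degenerates to the common value. A secondary point that should be stated explicitly is that the whole argument presupposes $\hat f_{h,i}(t)>0$ for all $i$ so that the logarithms are well defined throughout the evolution; this strict positivity is exactly what the SP construction and the positivity results of the previous subsection provide.
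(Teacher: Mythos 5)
Your proposal is correct and follows essentially the same route as the paper's proof: differentiate the discrete relative entropy, use the conservative semi-discrete form, sum by parts discarding boundary contributions, substitute the non-logarithmic flux reformulation \eqref{eq:Fluxh_re}, and conclude via $(x-y)\log(x/y)\ge 0$. Your extra remarks — disposing of the ``$+1$'' term through discrete mass conservation, the positivity of $\bar{f}_{h,i+1/2}^\infty$ as a logarithmic mean, and the need for strict positivity of $\hat f_{h,i}(t)$ — only make explicit steps the paper leaves implicit.
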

\begin{proof}
From the definition of relative entropy for all $h=0,\dots,M$ we have 
\[
\begin{split}
\dfrac{d}{dt} \mathcal H(\hat f_{h},\hat f_{h}^\infty)(t) &= \Delta v \sum_{i=0}^N \dfrac{d\hat f_h(t)}{dt} \left( \log \left(\dfrac{\hat f_{h,i}(t)}{\hat f_{h,i}^\infty} \right)+1\right) \\
&= \sum_{i=0}^N \left( \log \left(\dfrac{\hat f_{h,i}(t)}{\hat f_{h,i}^\infty} \right)+1\right)\left(\mathcal F_{h,i+1/2}(t)- \mathcal F_{h,i-1/2}(t)\right)
\end{split}
\]
After summation by parts we have
\[
\dfrac{d}{dt} \mathcal H(\hat f_{h},\hat f_{h}^\infty) (t) = -\sum_{i=0}^N \left[ \log \left(\dfrac{\hat f_{h,i+1}(t)}{\hat f_{h,i+1}^\infty} \right) -  \log \left(\dfrac{\hat f_{h,i}(t)}{\hat f_{h,i}^\infty} \right) \right] \mathcal F_{h,i+1/2}(t),
\]
and from the reformulation of the flux in \eqref{eq:Fluxh_re} we may conclude since $(x-y)\log(x/y)\ge 0$ for all $x,y\ge0$.
\end{proof}

\section{Numerical tests}\label{sect:num}
{In the present section, we present several tests for Fokker-Planck equations with background interactions and uncertain initial distribution. In particular, we numerically show the performance of the discussedstructure preserving stochastic Galerkin (SP-SG) method. Both the scenarios with fixed and evolving backgrounds will be considered. }
As discussed in Section \ref{sect:structure} the essential aspect for the accurate computation of the large time distribution of the problem \eqref{eq:FP_general} lies in the numerical approximation of the integral 
\[
\lambda_{i+1/2} = \int_{v_i}^{v_{i+1}} \dfrac{1}{D(v)}(\mathcal B[g](v,t)+D^\prime(v))dv,
\]
which defines the quasi-stationary states of each projection. In general a high order quadrature method is needed. {In the following numerical examples we will consider open Newton-Cotes quadrature methods up to the $6th$ order and the Gauss-Legendre quadrature, see e.g. \cite{DB}}. Through the text we will refer to these methods as $SP_k$, $k=2,4,6,G$, where the index $k$ indicates the order of the adopted quadrature method with $G$ referring to the Gauss-Legendre case. To highlight the advantages of this approach, a nonconstant diffusion function is considered for bounded domains. In all the tests we considered suitable restrictions on the time discretization to guarantee positivity of the expected solution of the problems both in the explicit and semi-implicit integration. Extension to the multidimensional case is considered at the end of this section.

\subsection{Test 1: Stationary background distribution}

Let us consider the evolution of a distribution function $f(\theta,v,t)$ in the presence of uncertainty that follows \eqref{eq:FP_general}, with $v\in[-1,1]$, and interacting with a given background distribution $g(v,t) = g(v)$ for all $t\ge 0$ of the form
\be\label{eq:g_fixed}
g(v) = \beta \exp\left\{ - \dfrac{(w-u_g)^2}{2\sigma_g^2} \right\}, \qquad u_g \in(-1,1), \sigma_g^2=0.01,
\ee
with $\beta>0$ a constant such that $\int_{-1}^1 g(v)dv = 1$. We consider in this test a nonconstant diffusion $D(v)=\frac{\sigma^2}{2} (1-v^2)^2$ with given $\sigma^2$ that will be specified later on. Furthermore, the nonlocal operator in \eqref{eq:B} is defined in terms of the interaction function
\be\label{eq:BCinter}
P(v,v_*) = \chi(|v-v_*|\le \Delta),
\ee
where $\Delta >0$ is a constant measuring the maximal distance under which interactions may occur. The introduced function $P(\cdot,\cdot)$ is usually defined as bounded confidence function. This model has been proposed in the literature to describe the evolution of the distribution of agents having opinion $v$ at time $t\ge 0$, see \cite{PT,T}. In particular, the presence of background interactions is generally considered to take into account the influence of external actors in opinion dynamics like the case of media \cite{BMS} or the action of possible control strategies \cite{APTZ}. Extensions to the case of uncertain interactions have been proposed in \cite{TZ}. 

\begin{figure}
\centering
\includegraphics[scale=0.45]{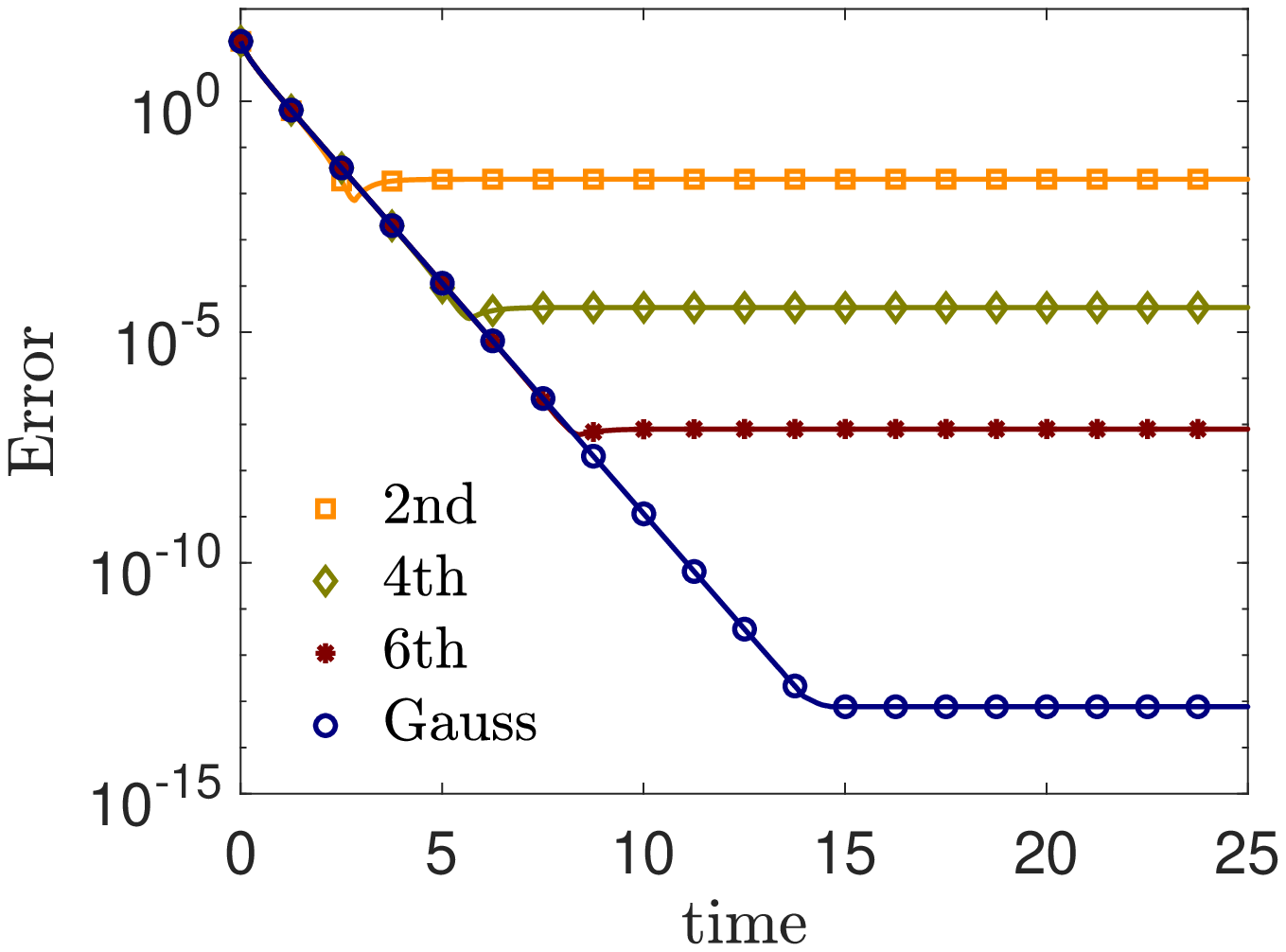}
\includegraphics[scale=0.45]{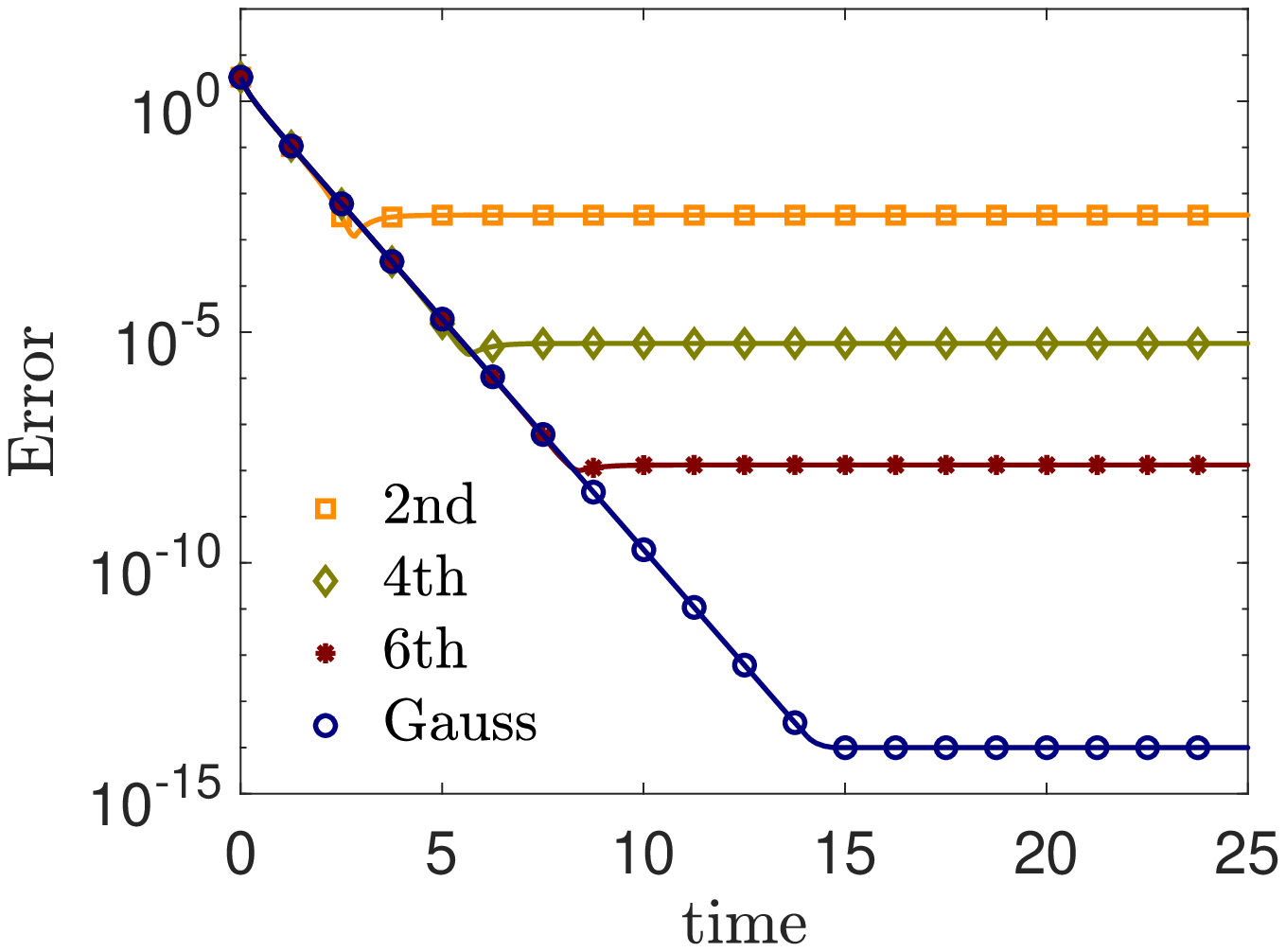} \\
\includegraphics[scale=0.45]{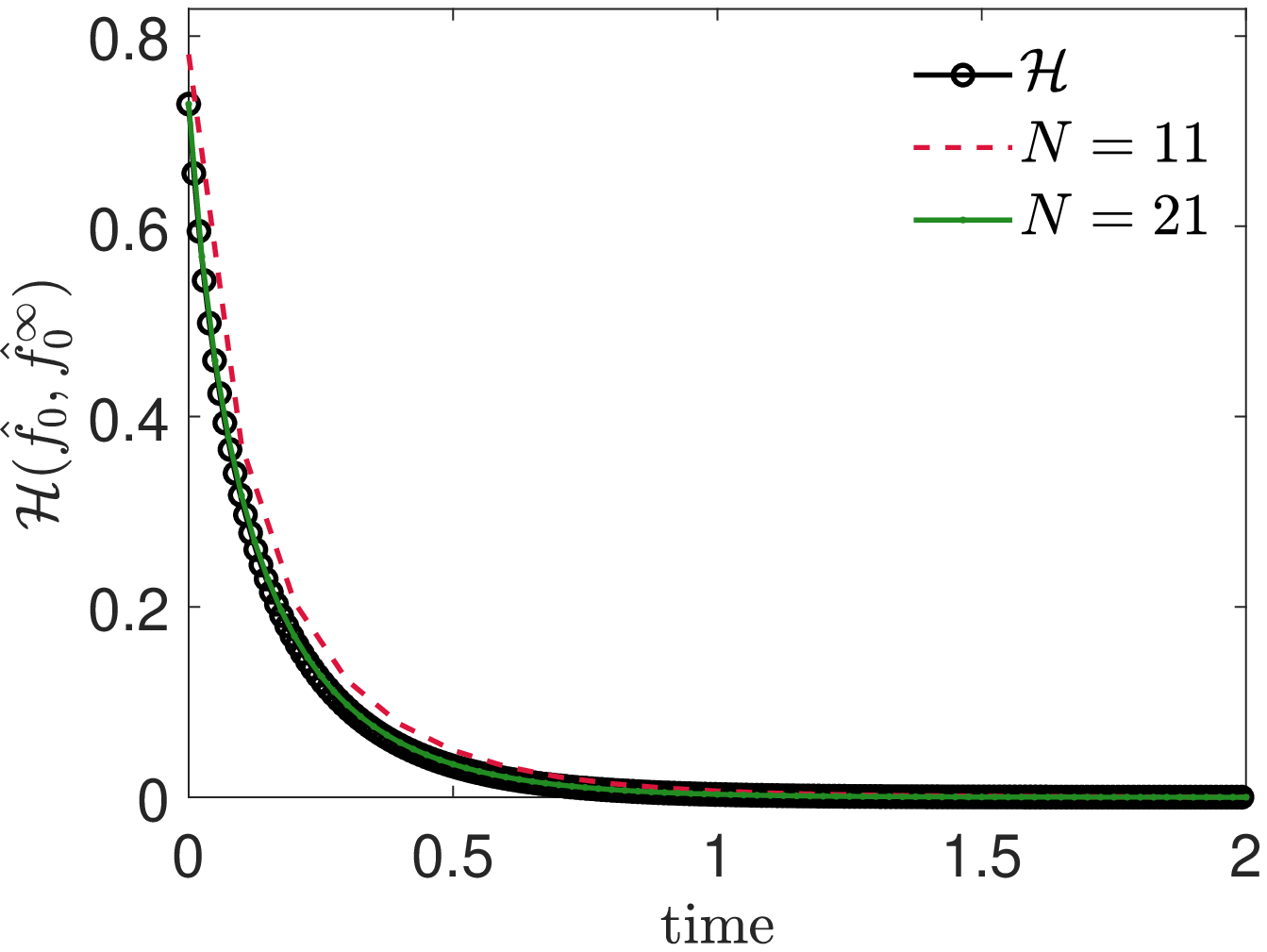}
\includegraphics[scale=0.45]{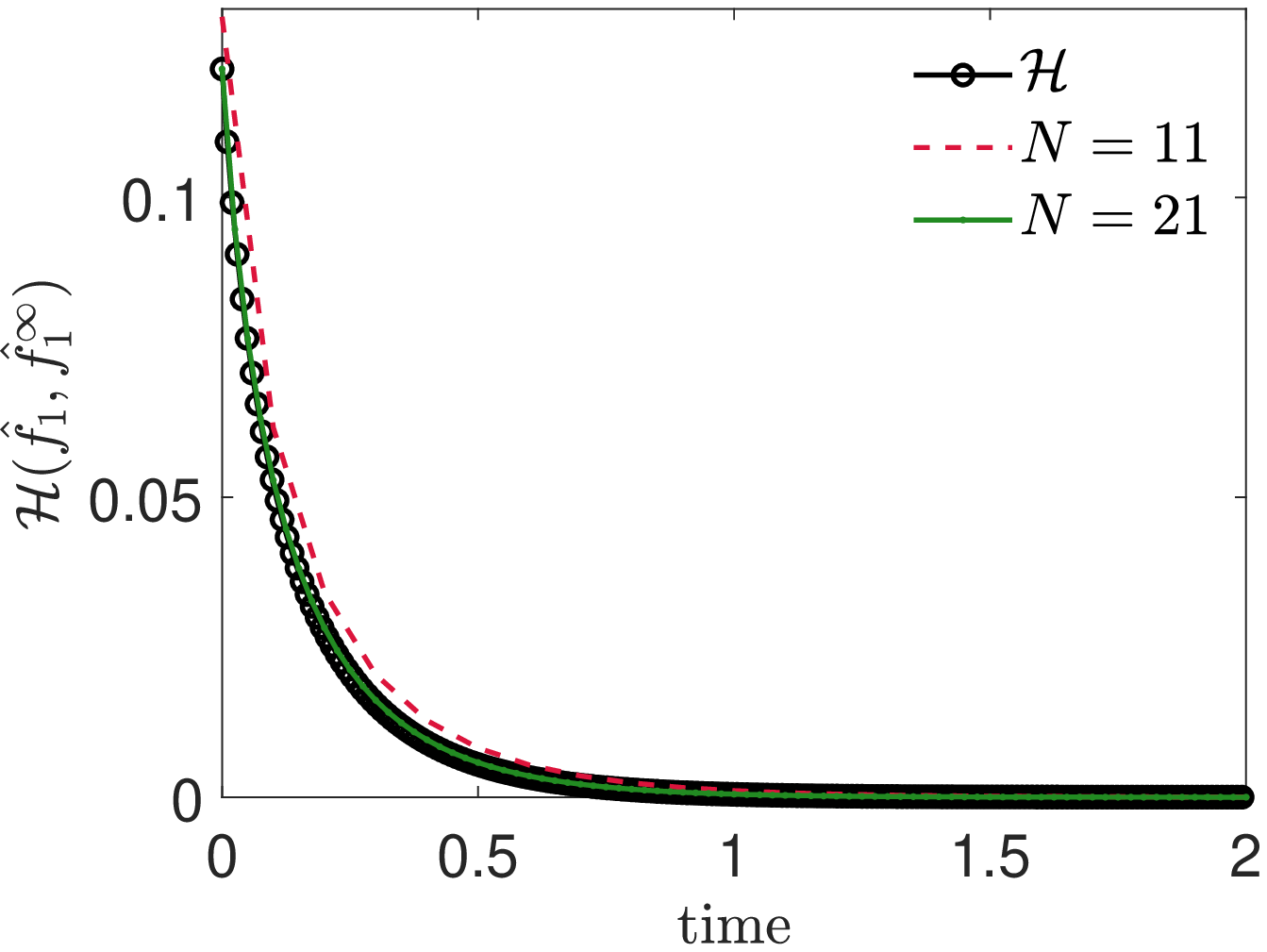}
\caption{Example 1. \textbf{Top row}: evolution of the $L^1$ relative error with respect to the stationary solution \eqref{eq:stat_h_num} for the $SP_k$ scheme with different quadrature methods. We considered the initial uncertain distribution $f(\theta,v,0)$ in \eqref{eq:f0_test1} with $\bar u = 0.25$, $\rho(\theta) = 1 + 0.5\theta$, $\theta\sim \mathcal U([-1,1])$, and $\sigma^2 = 2\cdot10^{-1}$. For all $h$ the solution has been computed for $N=41$ gridpoints over the time interval $[0,25]$, $\Delta t = \Delta v^2/(2\sigma^2)$. \textbf{Bottom row}: dissipation of the numerical entropies $\mathcal H(\hat f_0,\hat f_0^{\infty})$, $\mathcal H(\hat f_1,\hat f_1^{\infty})$ for the $SP_k$ scheme with Gaussian quadrature for two coarse grids with $N=11$ and $N=21$ gridpoints.  }
\label{fig:1}
\end{figure}

In this first test we consider as initial distribution 
\be\label{eq:f0_test1}
f(\theta,v,0) = C(\theta) \left[ \exp\left(-\dfrac{(v-u_1(\theta))^2}{2\sigma^2_0} \right) + \exp\left(-\dfrac{(v-u_2(\theta))^2}{2\sigma^2_0} \right)\right],
\ee
with $C(\theta)$ such that $\int_V f(\theta,v,0) = \rho(\theta)>0$ for all $\theta\in I_{\Theta}$ and $u_i(\theta)$, $i=1,2$ given by 
\[
u_1(\theta) = \bar u + \kappa\,\theta,\qquad u_2(\theta) = - \bar u + \kappa\,\theta,
\]
being $\theta\sim \mathcal U([-1,1])$. In the case $\Delta = 2$ it follows that $P\equiv 1$ and we can compute the explicit stationary distribution
\[
f^\infty(\theta,v) = \dfrac{C(\theta)}{(1-v^2)^2} \left(\dfrac{1+v}{1-v} \right)^{u_g/(2\sigma^2)} \exp\left\{-\dfrac{1-u_g v}{\sigma^2(1-v^2)}\right\}.
\]

The stochastic Galerkin decomposition of the resulting problem can be performed by considering a Legendre polynomial basis $\{\Phi_h \}_{h=0}^M$ being $\Psi(\theta) =\frac{1}{2} \chi(\theta\in[-1,1])$. The resulting system of equations have the form \eqref{eq:FP_h} whose asymptotic solution for all $h=0,\dots,M$ reads
\be\label{eq:stat_h_num}
\hat{f}_h^{\infty}(v) = \dfrac{C_h}{(1-v^2)^2} \left(\dfrac{1+v}{1-v} \right)^{u_g/(2\sigma^2)} \exp\left\{-\dfrac{1-u_g v}{\sigma_f^2(1-v^2)}\right\}.
\ee
being $C_h = \frac{1}{2} \int_{I_\Theta}C(\theta)\Phi_h(\theta)d\theta$. 
In Figure \ref{fig:1} we present the evolution of the $L^1$ relative error computed with respect to the exact stationary state for the $SP_k$, $k=2,4,6,G$, schemes for various quadrature methods. To exemplify the advantages, we consider two projections $h=0$ (left) and $h=1$ (right). For each $SP_k$ we considered $N= 41$ gridpoints for the discretization of the state variable.

 We can observe how we achieve different accuracy in terms of the steady states of the problem in relation to the considered quadrature rules for both $h=0,1$. Further, with low order quadrature we approach to the numerical steady state of the method faster than with high order rules. We observe that with a Gauss-Legendre method we essentially reach machine precision in finite time for each projection. In the same figure, we show the dissipation of the relative entropy functional $\mathcal H(\hat f_h,\hat f_h^\infty)$ discussed in Section \ref{subsect:asymp} with $h=0,1$ obtained with the structure preserving method. We present the case of two coarse grids obtained with $N=11$, $N=21$ gridpoints compare with the exact dissipation of the relative entropy. 

\begin{figure}
\centering
\includegraphics[scale=0.45]{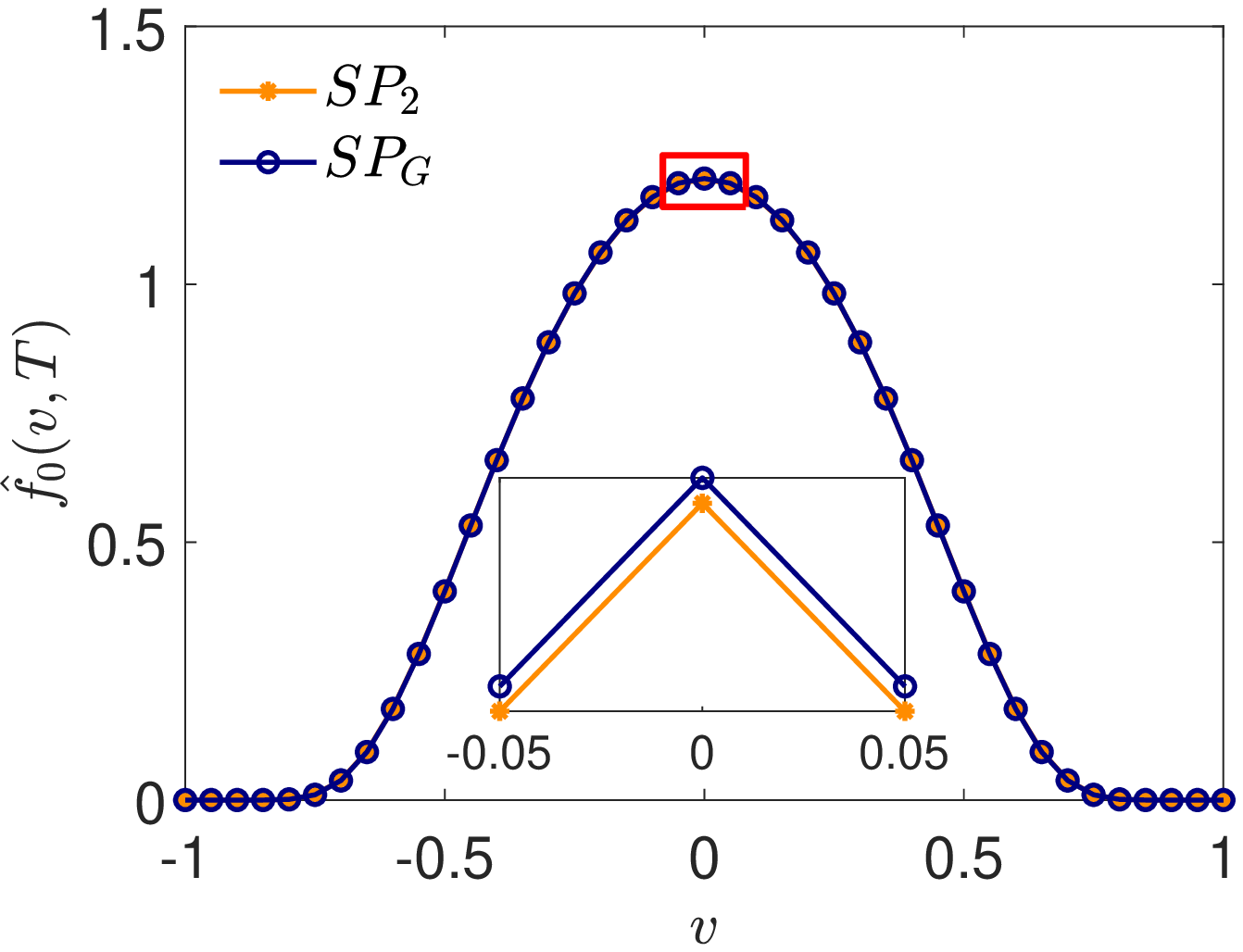}
\includegraphics[scale=0.45]{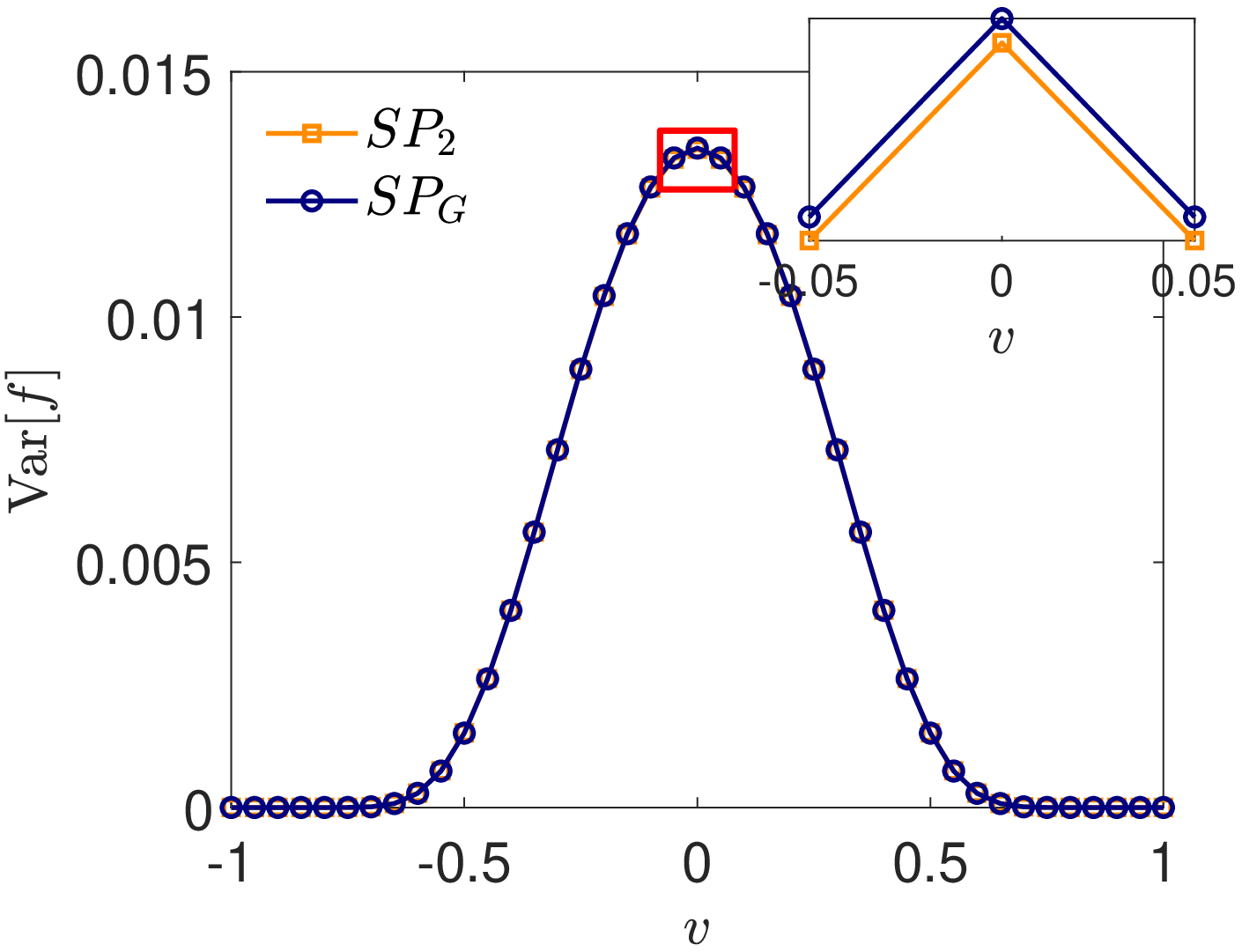}
\caption{Example 1. Large time behavior of expectation (left) and variance (right) of $f(\theta,v,t)$ obtained with $SP_k$ schemes and $k=2,G$ and an uncertain initial distribution of the form \eqref{eq:f0_test1}. We can observe how the high accuracy of the proposed scheme reflects in an arbitrary accurate numerical description of the large time statistical moments of the solution of the problem. $t\in[0,T]$ with $T = 15$ and $N=41$, $\Delta t = \Delta v^2/(2\sigma^2)$. }
\label{fig:fstat}
\end{figure}

The high accuracy of the scheme in the description of the large time behavior, in each polynomial space, reflects in a high accuracy in the approximation of statistical moments of the solution of the problem, see Figure \ref{fig:fstat}. Here, we considered the schemes $SP_2$ and $SP_G$, that is the structure preserving schemes with approximation of $\lambda_{i+1/2}$ with a $2$nd order and Gauss-Legendre method respectively. We highlight how the approximated expectation $\hat f_0$ is positive thanks to the properties of the scheme.

\begin{table}
\begin{center}
\begin{tabular}{ c || c c c c }
\hline
$\mathbb E[f]$ &  \multicolumn{4}{c} {$SP_k$} \\ \hline

             Time         &       2   &  4  &  6  & G    \\ 
             \hline

\multirow{1}{*} {1}  & 2.0785 &  1.9989 & 2.0025 & 2.0026 \\
                             \hline
\multirow{1}{*}{5}  & 1.9949 & 4.2572 & 2.2868 & 2.3361  \\
                             \hline
\multirow{1}{*}{10} & 1.9953  & 3.9141 & 6.4698 & 7.3367  \\
                             \hline
                             \hline
$\textrm{Var}(f)$ &  \multicolumn{4}{c} {$SP_k$} \\ \hline
		Time         &     2   &  4  &  6  & G    \\ 
             \hline
\multirow{1}{*} {1}& 2.0870 &  2.0001 & 2.0030 & 2.0031 \\
                             \hline
\multirow{1}{*}{5}  & 1.9978 & 4.4192 & 2.2398 & 2.2789  \\
                             \hline
\multirow{1}{*}{10} & 1.9982  & 3.9309 & 6.6929 & 7.3405  \\
                             \hline
\end{tabular}
\caption{{Example 1}. Estimation of the order of convergence toward the reference stationary state for SP--CC scheme with  RK4 method. Rates have been computed using $N=21,41,81$, $\sigma^2/2=0.1$, $\Delta t = \Delta w^2/\sigma^2$. }
\label{tab:opinion}
\end{center}
\end{table}

In Table \ref{tab:opinion} we estimate the order of convergence of the SP-SG method in terms of accuracy of the expected quantities $\mathbb E[f]$, and $\textrm{Var}(f)$ in their stochastic Galerkin approximation \eqref{eq:expected}-\eqref{eq:var}. It is easily observed how for the approximation of the variance it is required the solutions of the whole set of projections $h=0,\dots,M$. In the present test we considered $M = 10$. Furthermore, we used $N = 21,41,81$ and the order of convergence of the explicit structure preserving schemes is measured as $\log_2 \frac{e_1(t)}{e_2(t)}$, where $e_1(t)$ is the relative error at time $t\ge 0$ of the expected solution and its variance computed with $N =21$ gridpoints with respect to that computed with $N=41$ gridpoints and, likewise, $e_2(t)$ is the relative error at time $t\ge 0$ computed with $N=41$ with respect to that computed with $N= 81$ gridpoints. The time integration has been performed with RK4 at each time step chosen in such a way that the restriction for positivity of the scheme in Theorem \ref{thm:explicit_positivity} is satisfied, i.e. $\Delta t = O(\Delta v^2)$. We can observe how the $SP_k$ schemes are second order accurate in the transient regimes and assume the order of the quadrature method near the expected steady state and its related variance. \\

\begin{figure}
\centering
\subfigure[$t=0$]{
\includegraphics[scale=0.45]{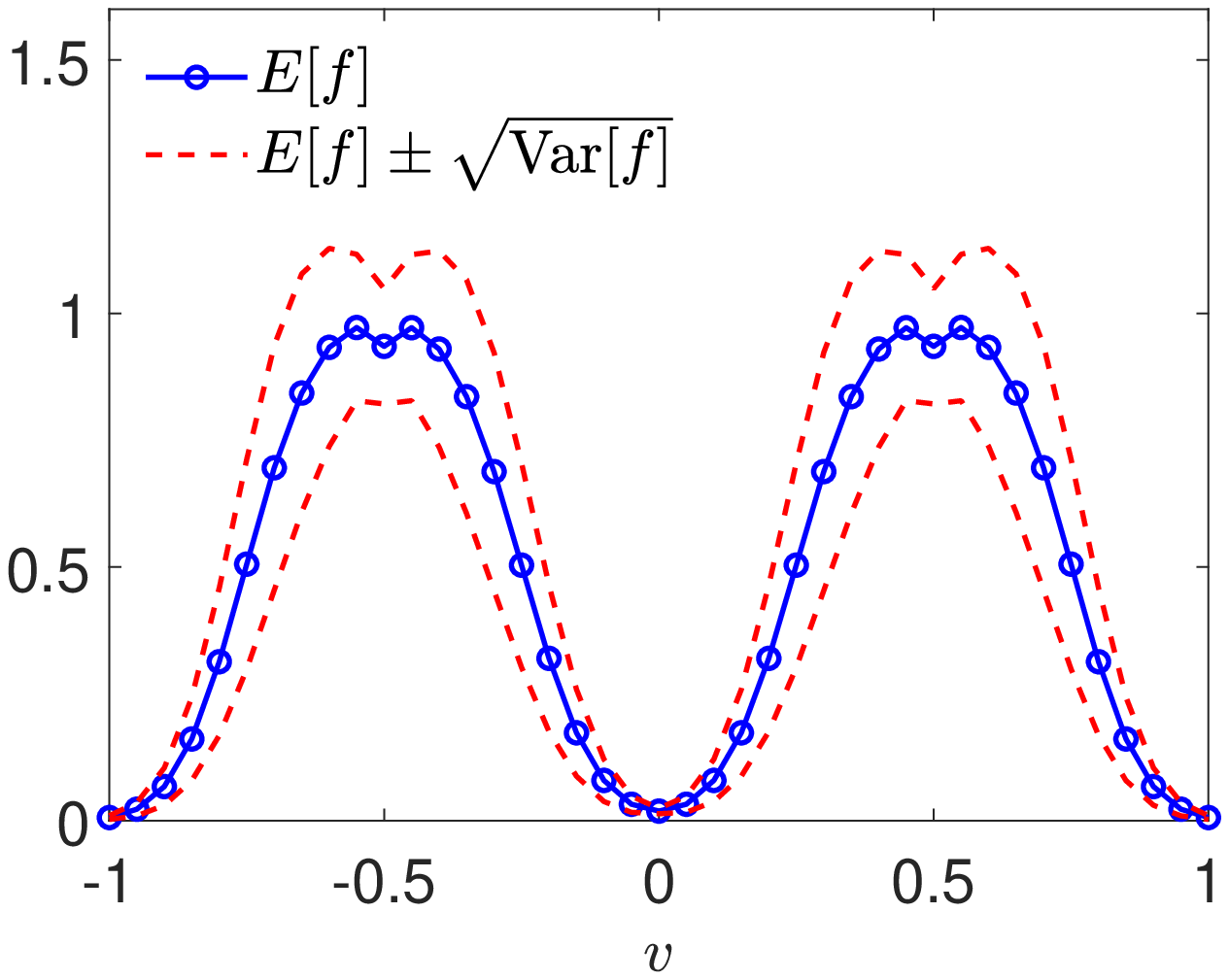}}
\subfigure[$t=10$]{
\includegraphics[scale=0.45]{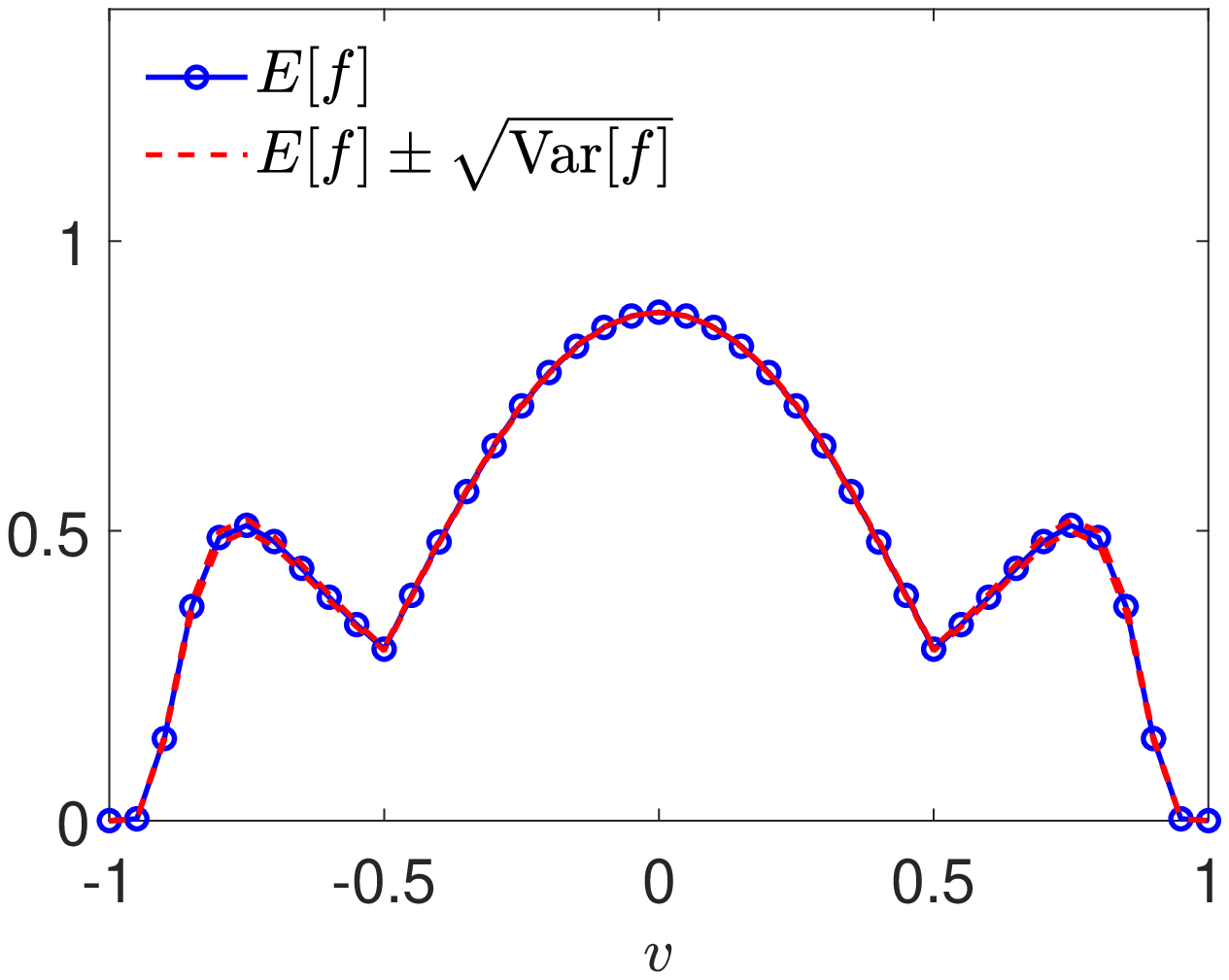}}\\
\subfigure[$\mathbb{E}(f)$]{
\includegraphics[scale=0.45]{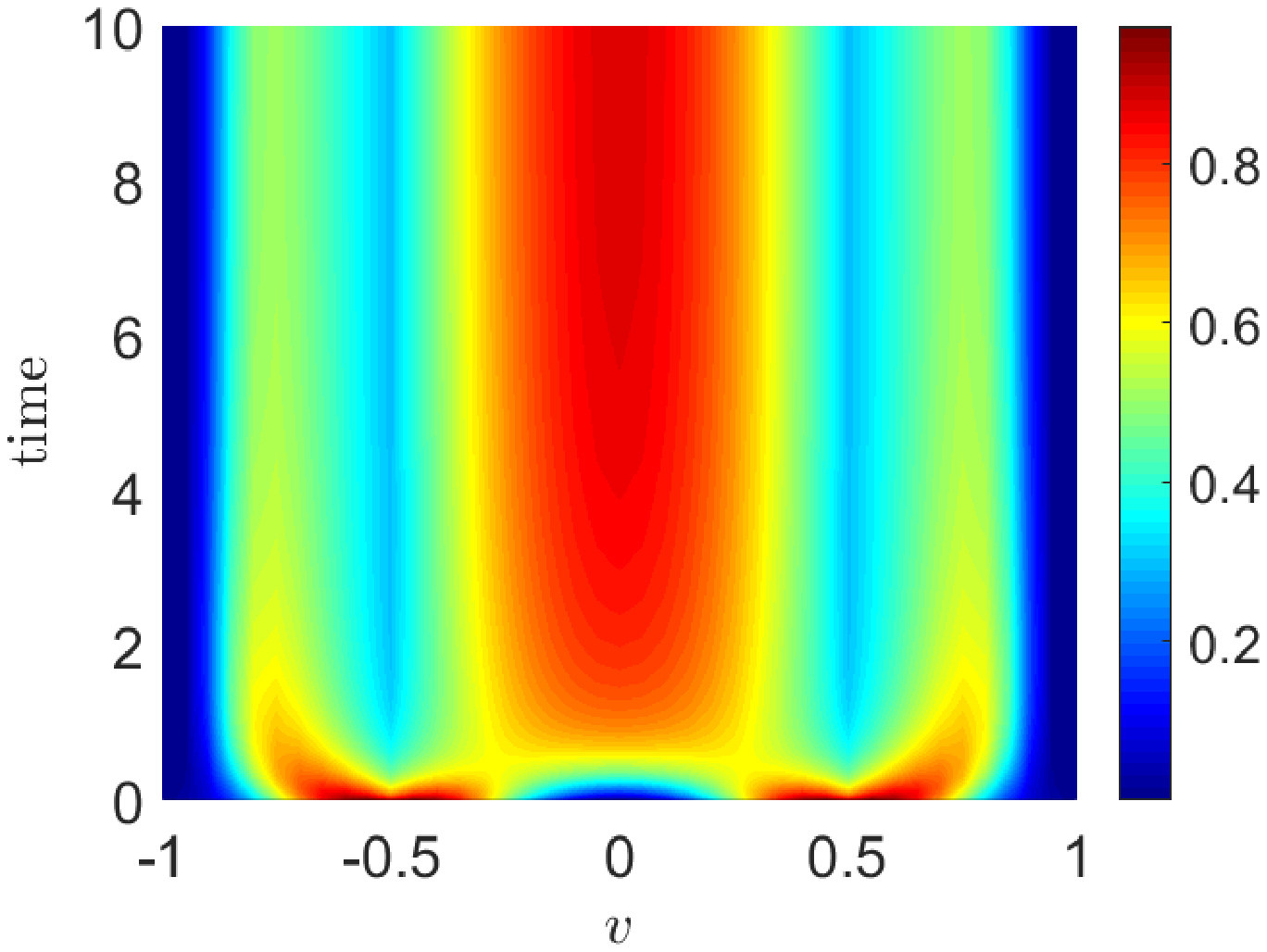}}
\subfigure[$\textrm{Var}(f)$]{
\includegraphics[scale=0.45]{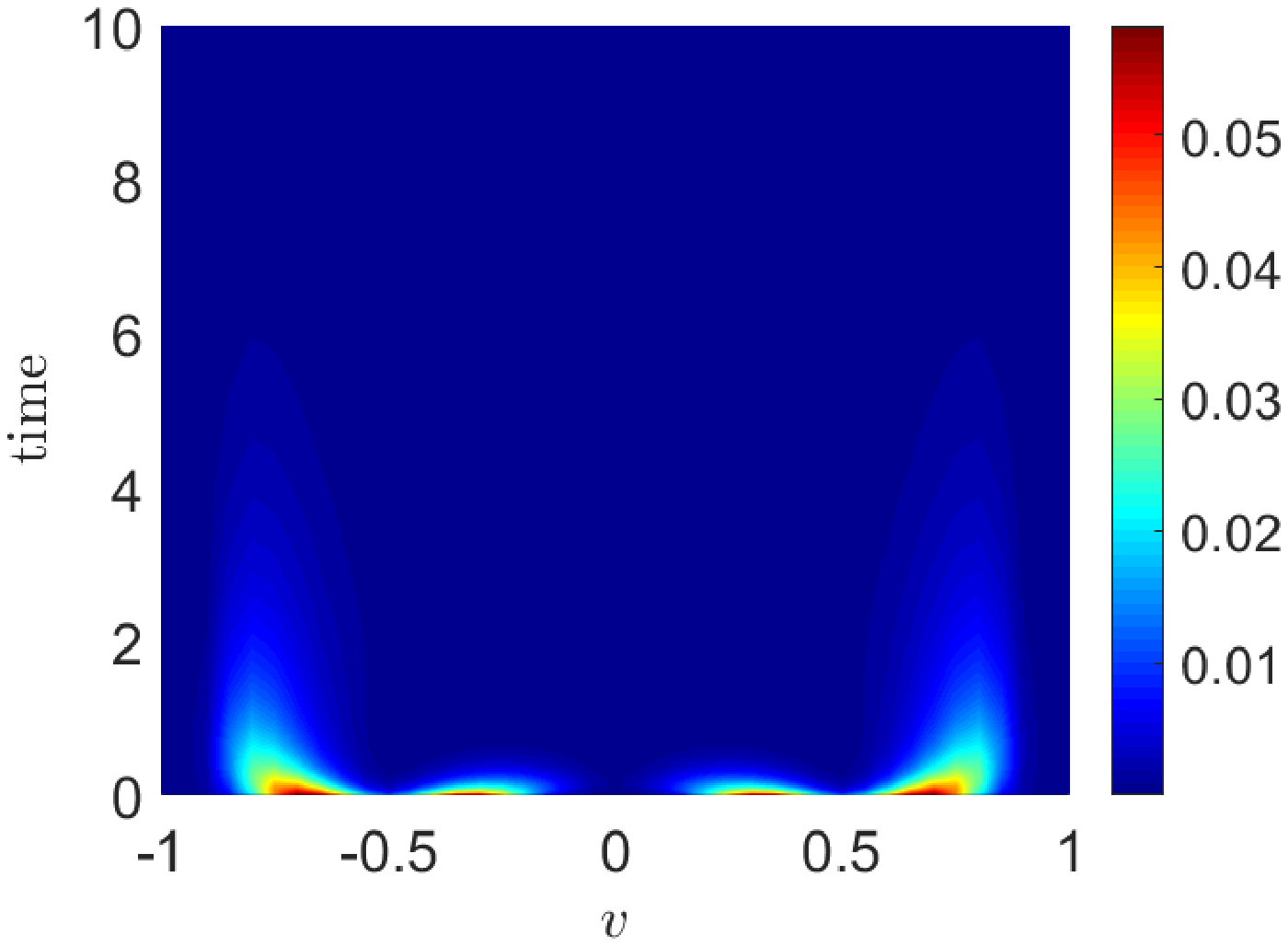}}
\caption{Example 1. \textbf{Top row}: initial distribution and solution at time $T=10$ in the case of bounded confidence interactions and $\Delta=1$ obtained with $SP_G$, $N=41$ gridpoints and $M=5$ projections. \textbf{Bottom row}: evolution of the expected solution (left) and its variance (right) in the interval $[0,10]$. }
\label{fig:BC}
\end{figure}

{More generally, $\Delta <2$ and therefore $P(v,v_*)\ne 1$, so that we have no analytical insight on the large time solution $\hat{f}^\infty_h(v)$ in each polynomial space. }In Figure \ref{fig:BC} we consider the case of bounded confidence type interactions \eqref{eq:BCinter} with $\Delta = 1.0$ and a fixed background distribution $g(v)$ of the form 
\[
g(v) = \beta\left( \exp\left\{ - \dfrac{(v-u_g)^2}{2\sigma_g^2} \right\} + \exp\left\{ - \dfrac{(v+u_g)^2}{2\sigma_g^2} \right\}\right), 
\]
with $u_g = \frac{1}{2} $ and $\sigma_g^2 = 10^{-2}$. We considered the uncertain initial density \eqref{eq:f0_test1} with deterministic initial mass $\rho(\theta) = 1$ and uncertainty in $u_1(\theta)$, $u_2(\theta)$ so that
\[ 
u_1= \dfrac{1}{2} + \dfrac{1}{4}\, \theta, \qquad u_2= -\dfrac{1}{2} + \dfrac{1}{4}\, \theta,
\]
with $\theta\sim \mathcal U([-1,1])$.  The integral $\mathcal B[g](v)$ has been evaluated through a trapezoidal rule. As observed in Section \ref{subsect:asymp} the large time solution for all $h=0,\dots,M$ does not depend on the uncertainties of the initial distribution. Indeed, the variance annihilates as we can observe in \ref{fig:BC}(d) and the asymptotic state coincides with $\mathbb E[f]$.

\subsection{Example 2: Evolving background distribution}

In this section we test the performance of the introduced structure preserving stochastic Galerkin scheme in the case of an evolving background distribution. To exemplify a dynamic background distribution we consider the solution of a linear advection equation 
\be\label{eq:dyn_g}
\partial_t g(v,t) + \alpha \partial_v g(v,t) = 0, \qquad \alpha>0,
\ee
which is coupled to the original stochastic Fokker-Planck equation in \eqref{eq:FP_general} through the operator $\mathcal B[g](v,t)$. The initial background is considered of the form \eqref{eq:g_fixed}, with $u_g = -\frac{1}{2}$, we consider periodic boundary conditions for \eqref{eq:dyn_g} and $\alpha = 0.05$. The advection equation is solved numerically with a Lax-Wendroff scheme for each time $t\ge0$. In the following we consider as uncertain initial distribution \eqref{eq:f0_test1} with $\bar u = 0.5$, $\kappa = 0.25$, and the mass is $\rho(\theta) = 1+\frac{1}{2}\theta$ with uniform perturbation $\theta\sim\mathcal U([-1,1])$.

\begin{figure}
\centering
\subfigure[$g(v,t)$]{
\includegraphics[scale=0.45]{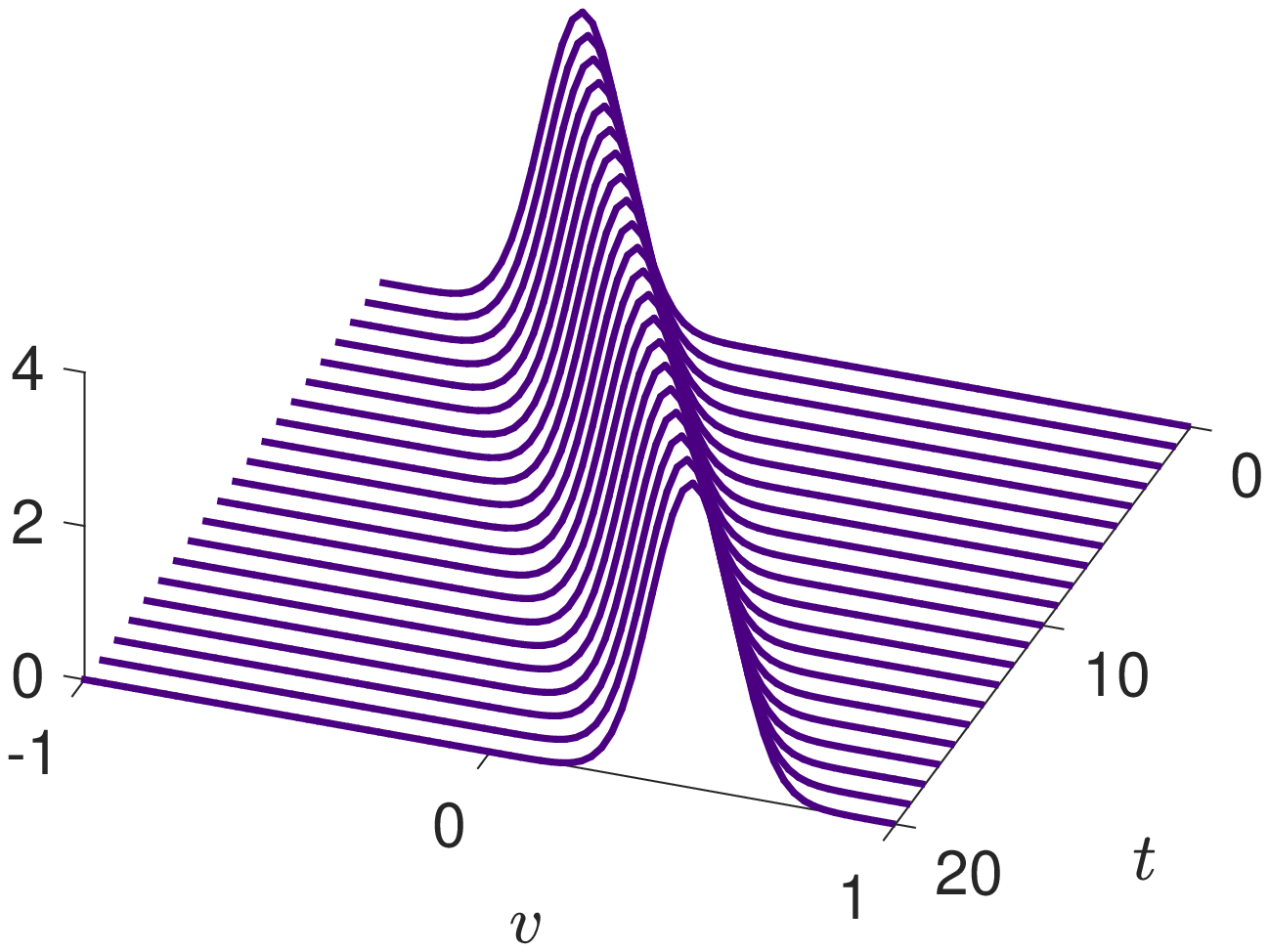}}
\subfigure[$t=20$]{
\includegraphics[scale=0.45]{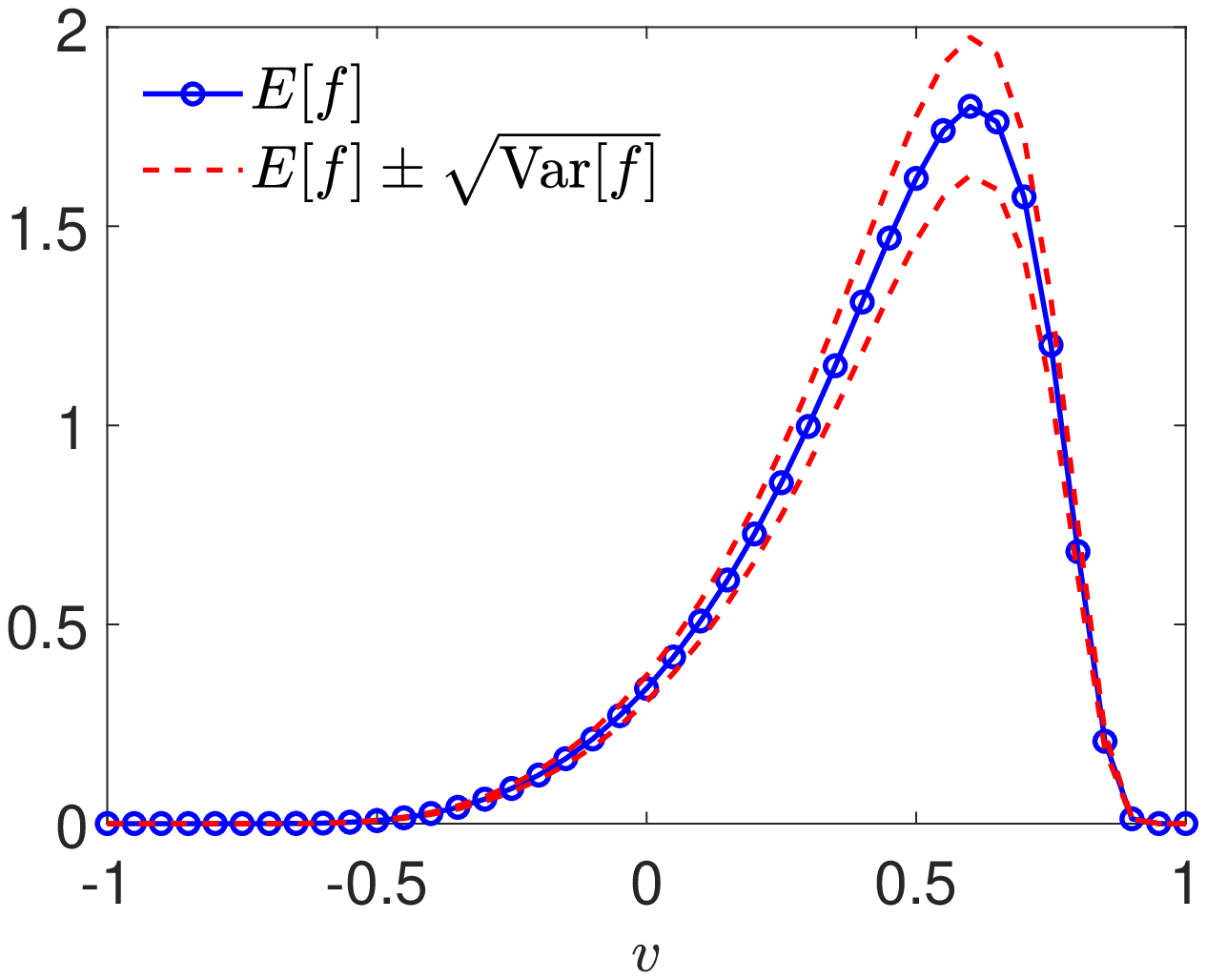} }\\
\subfigure[$\mathbb E(f)$]{
\includegraphics[scale=0.45]{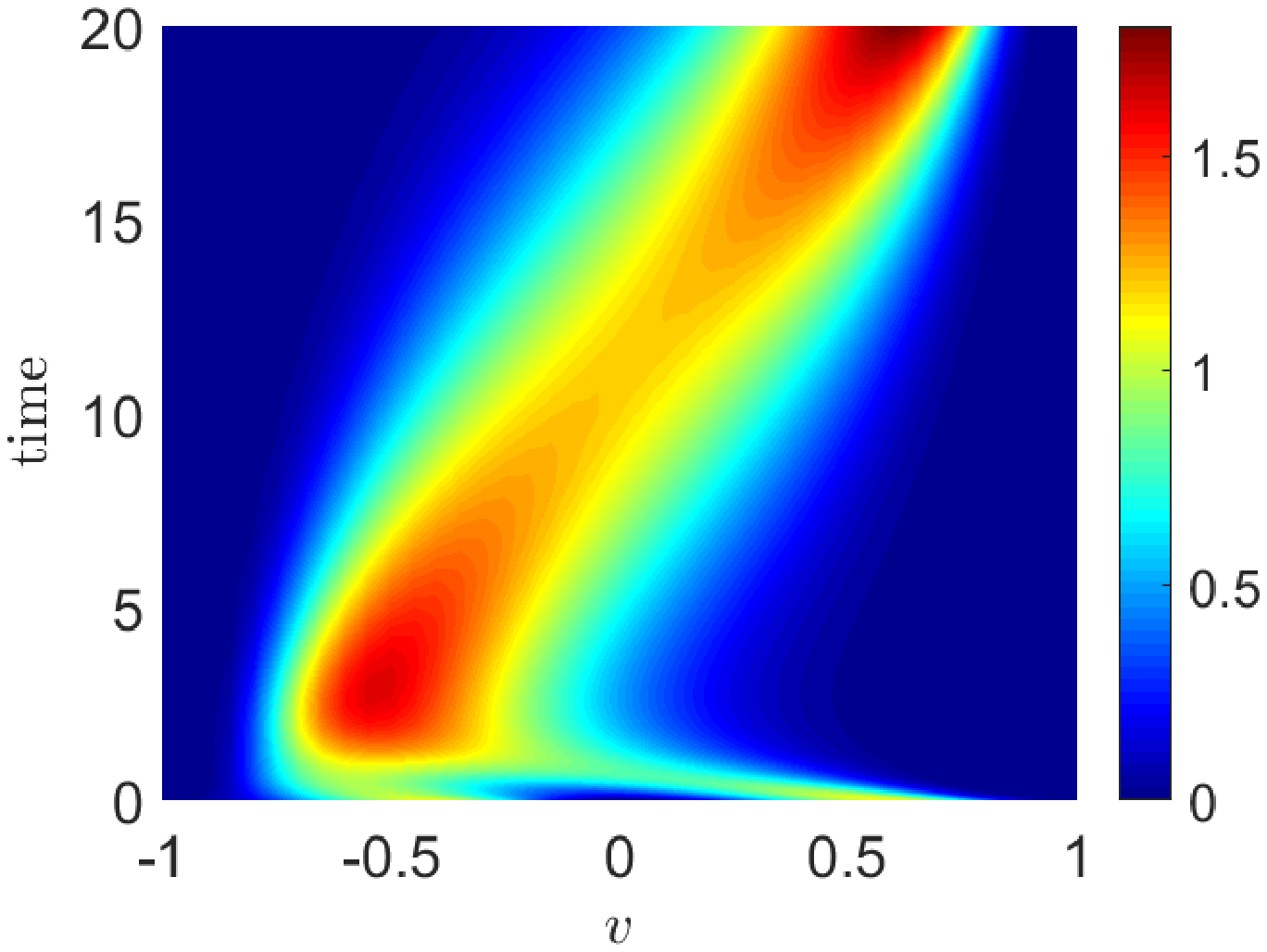}}
\subfigure[$\textrm{Var}(f)$]{
\includegraphics[scale=0.45]{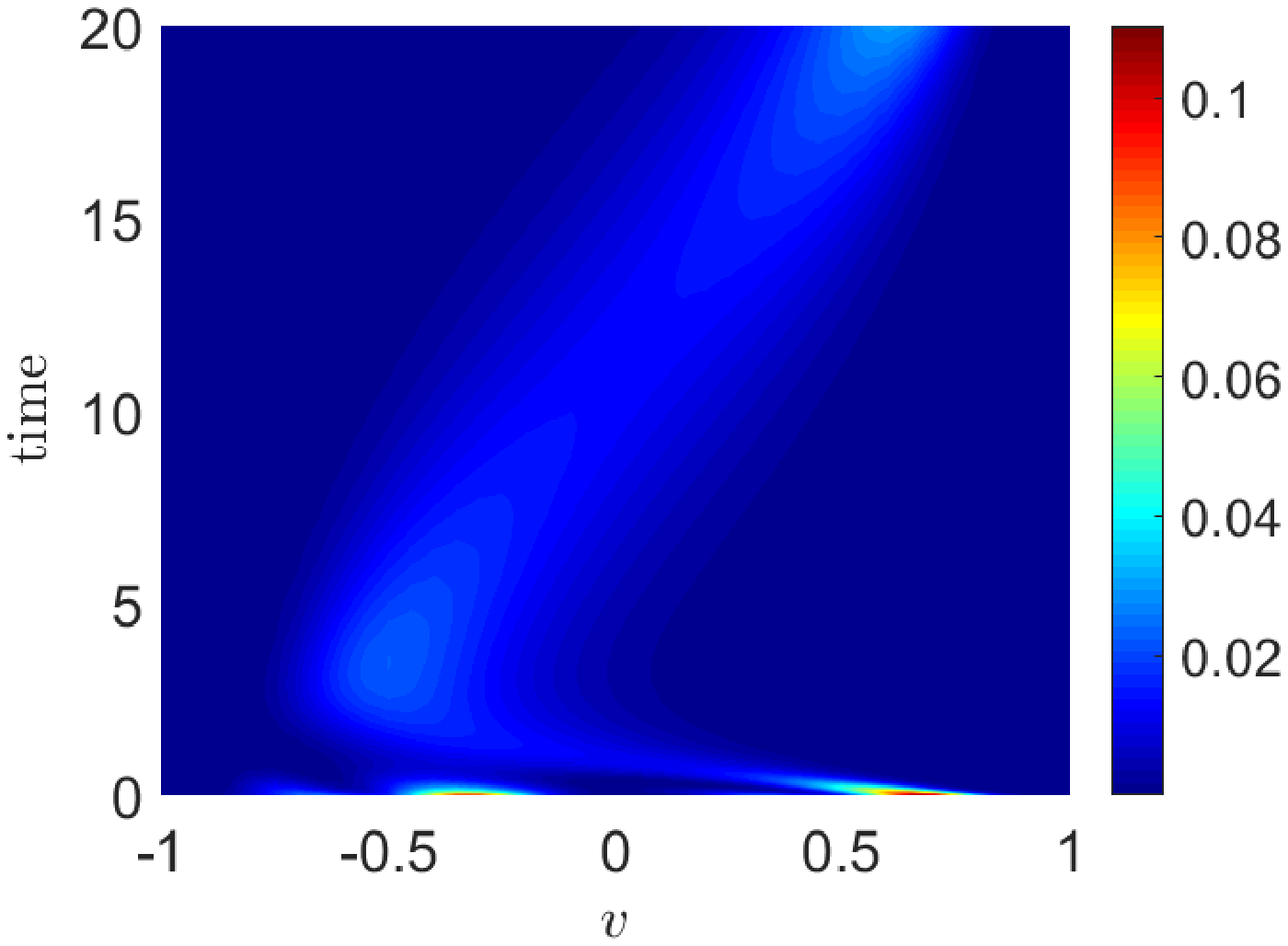}}
\caption{Example 2. \textbf{Top row}: (left): evolution of the background distribution according to the linear advection equation \eqref{eq:dyn_g} with $\alpha = 0.05$, (right) expected solution of the \eqref{eq:FP_general} and bounded confidence interactions with $\Delta = 1.0$ obtained with the stochastic Galerkin $SP_G$ scheme and semi-implicit time integration for $h= 0,\dots,5$, in red we represent the estimated confidence bands. \textbf{Bottom row}: evolution over the time interval $[0,20]$ of the expected solution and of its variance. We considered $N = 41$ and $\Delta t = \textrm{CFL} \Delta v$, $\textrm{CFL} = 0.5$ so that the solution of the scheme advection equation is stable. }
\label{fig:ex2}
\end{figure} 

 In Table \ref{tab:opinion_moving} we estimate the order of convergence of the stochastic Galerkin structure preserving scheme in terms of the two statistical quantities $\mathbb E(f)$, $\textrm{Var}(f)$ in the case of a background distribution evolving as \eqref{eq:dyn_g}.  The numerical integration is a second-order semi-implicit method, see \cite{BFR,PZ1}. The approximation of the variance we considered $M=5$ projections. We can observe that the dynamic background distribution prevents the formation of steady state solution in the original Fokker-Planck problem. Indeed, for each $SP_k$, $k=2,4,6,G$ the scheme  initially increases its order according to the quadrature method and for large times it is reduced to the initial second-order. 
 
 In Figure \ref{fig:ex2} we can observe the behavior of \eqref{eq:FP_general} in the bounded domain $v\in[-1,1]$ and interacting through a bounded confidence type $P(v,v_*)$ with $\Delta = 1$, where the evolving background follows the advection \eqref{eq:dyn_g}.  

\begin{table}
\begin{center}
\begin{tabular}{ c || c c c c }
\hline
$\mathbb E[f]$ &   \multicolumn{4}{c} {$SP_k$} \\ \hline

             Time         &      2   &  4  &  6  & G    \\ 
             \hline

\multirow{1}{*} {1}        
						& 1.8976 &  2.0834 & 2.0972 & 2.0975 \\
                             \hline
\multirow{1}{*}{5}        
						& 2.4162 & 4.7225 & 4.7940 & 4.7955  \\
                             \hline
\multirow{1}{*}{10}        
						 & 2.6446  & 4.5139 & 4.5082 & 4.5082  \\
			          \hline
\multirow{1}{*}{20}        
						 & 2.0685  & 2.5829 & 2.6271 & 2.6273   \\
                             \hline
                             \hline
$\textrm{Var}(f)$ &  \multicolumn{4}{c} {$SP_k$} \\ \hline
		Time         &          2   &  4  &  6  & G    \\ 
             \hline
\multirow{1}{*} {1}         
						& 1.8834 &  2.1088 & 1.8565 & 1.8568 \\
                             \hline
\multirow{1}{*}{5}         
						& 2.4162 & 4.3191 & 4.3937 & 4.3953  \\
                             \hline
\multirow{1}{*}{10}        
						 & 2.5793  & 4.5869 & 4.5809 & 4.5809  \\
                             \hline
 \multirow{1}{*}{20}        
						 & 2.2616  & 2.4154 & 2.4678 & 2.4679  \\
                             \hline
\end{tabular}
\caption{{Example 2}. Estimation of the order of convergence of the scheme in the case of dynamic background $g(v,t)$ for second order semi-implicit method. The evolution of the background distribution follows an advection equation with $\alpha = 0.05$. The rates have been computed using $N=21,41,81$, $\sigma^2/2=0.1$, $\Delta t = \textrm{CFL}\Delta v$ with $\textrm{CFL} = 0.5$.  }
\label{tab:opinion_moving}
\end{center}
\end{table}

\subsection{Example 3: 2D model of swarming}

We consider a kinetic swarming model with self-propulsion and diffusion with uncertain initial distribution. In a deterministic framework we refer to \cite{BCCD} where a similar model has been studied in the nonlocal setting. {Here the authors proved a sharp phase transition between self-propulsion forces and diffusion, that discriminate the minimal amount of noise needed to obtain symmetric distribution with zero mean.  }
The study of possible uncertain quantities in the dynamics is here of paramount importance, since coefficients like the noise intensity and the self-propulsion strength, are commonly based on field observations and empirical evidence. We refer to \cite{CZ} for a more detailed analysis of the influence of uncertain quantities in problems with phase transition. In the following, we concentrate on the case of uncertain initial distribution. 

We consider a model for the evolution of the density of individuals $f = f(\theta,v,t)$ having velocity $v\in \RR^{2}$ at time $t\ge 0$ and uncertain initial condition $f(\theta,v,0)$ having mass $\rho(\theta)$. In details the model reads
\be\label{eq:swarming}
\partial_t f(\theta,v,t) = \nabla_v \cdot \left[\alpha v(|v|^2-1)f(\theta,v,t) + (v-u_g)f(\theta,v,t) + D\nabla_v f(\theta,v,t) \right],
\ee
being $\alpha>0$ the self-propulsion strength and $D>0$ a constant noise intensity. At a difference with the original nonlinear case here the agents interact with a background distribution $g(v)$ through its mean velocity $u_g = \int_V v g(v)dv$. It may be shown that a free energy functional is defined which dissipates along solutions. Further, stationary solutions have the form 
\[
f^\infty(\theta,v) = C(\theta) \exp\left\{ -\dfrac{1}{D}\left[ \alpha \dfrac{|v |^4}{4}+(1-\alpha)\dfrac{|v|^2}{2}-u_g \cdot v\right] \right\},
\]
with $C(\theta)>0$ a normalization constant. In particular, we focus on the 2D case and we consider the fixed background distribution
\be\label{eq:back_swarming}
\begin{split}
g(v) = \dfrac{1}{2\pi \sigma_{x}\sigma_y} \exp\left\{-\dfrac{1}{2}\left[\dfrac{(v_x-\mu_x)^2}{\sigma_x^2} + \dfrac{(v_y-\mu_y)^2}{\sigma_y^2} \right] \right\}, \qquad v = (v_x,v_y).
\end{split}
\ee

\begin{figure}
\centering
\subfigure[$D=0.8$, $\mu_x=\mu_y = 0$]{
\includegraphics[scale=0.5]{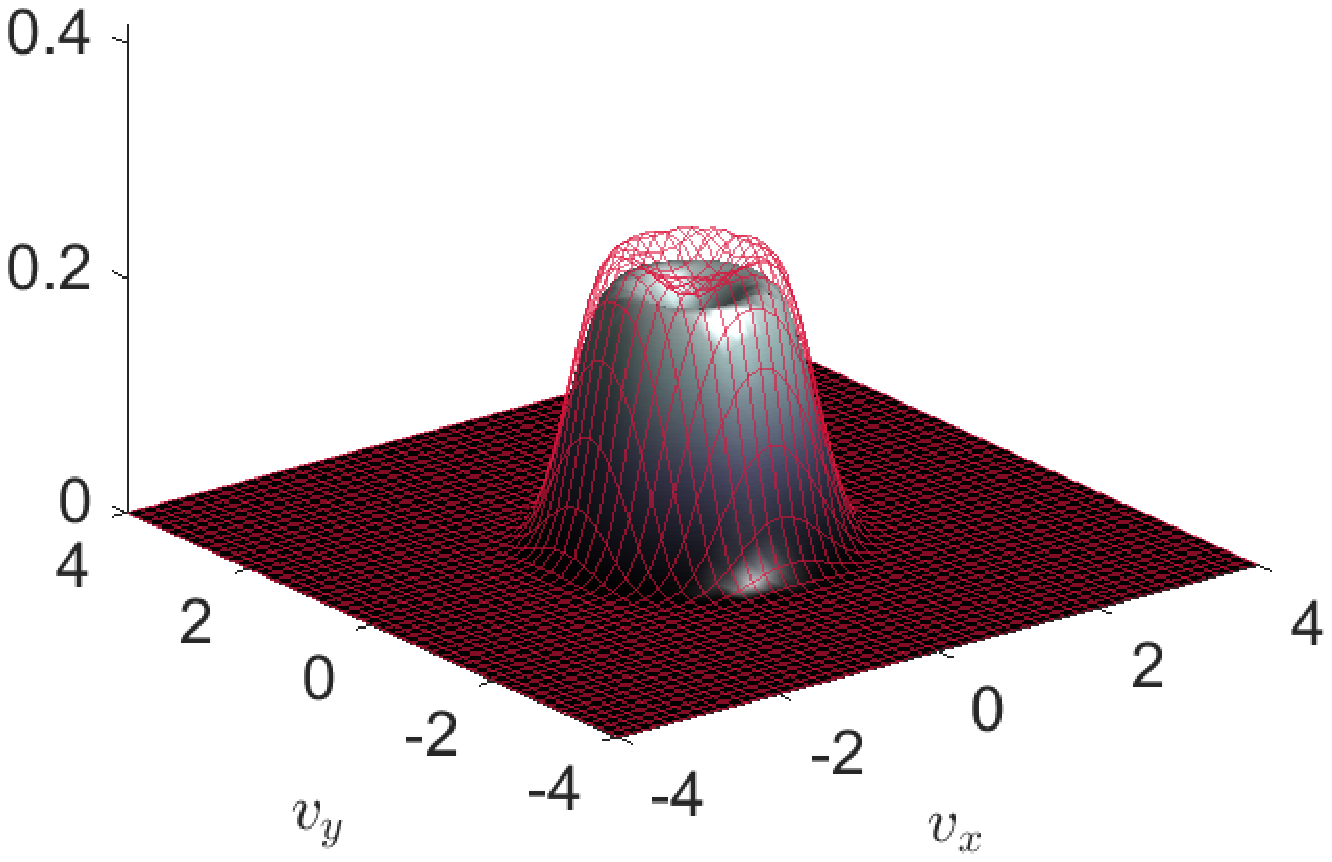}}
\hspace{-2em}
\subfigure[$D=0.2$, $\mu_x = \mu_y = 0$]{
\includegraphics[scale=0.5]{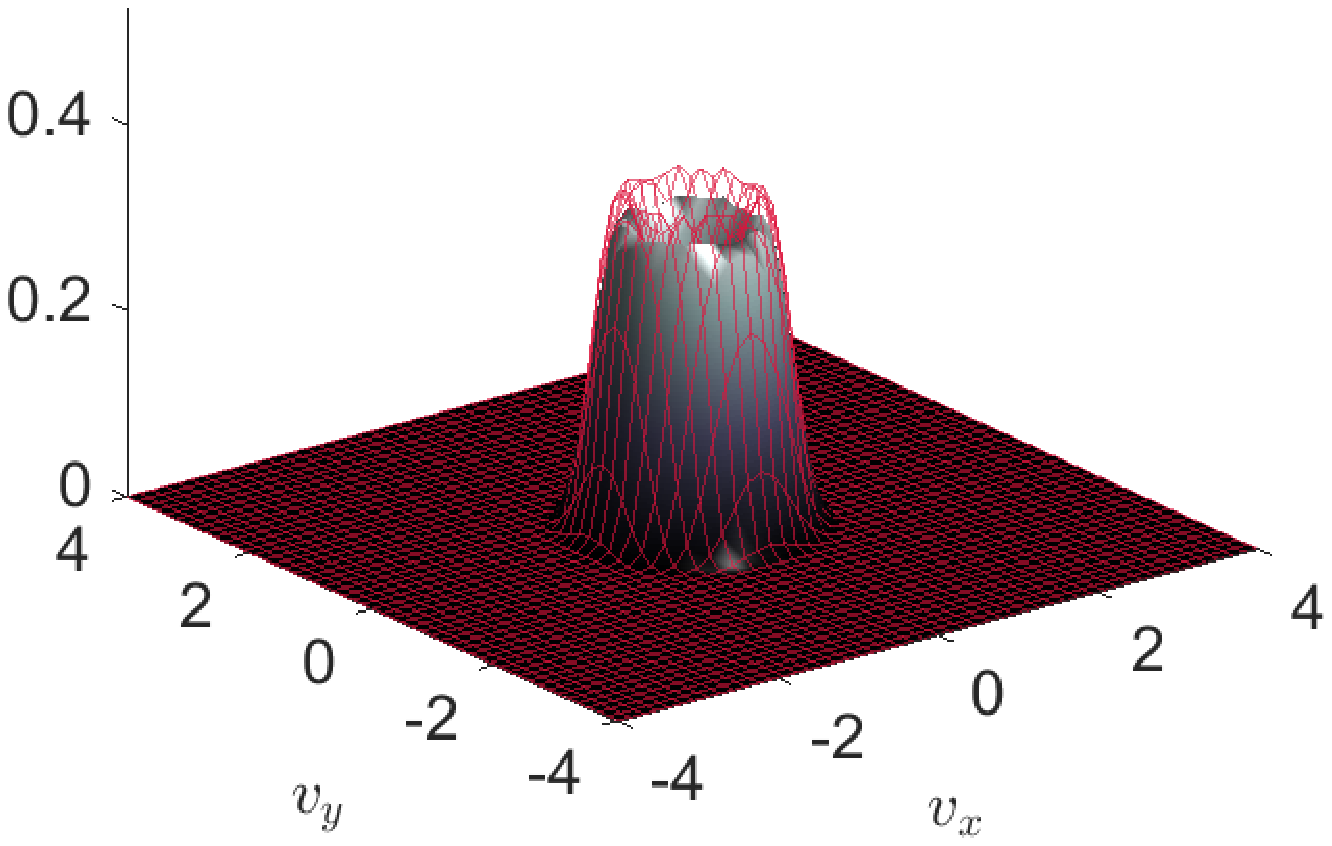}} \\
\subfigure[$D=0.8$, $\mu_x=\mu_y = 1$]{
\includegraphics[scale=0.5]{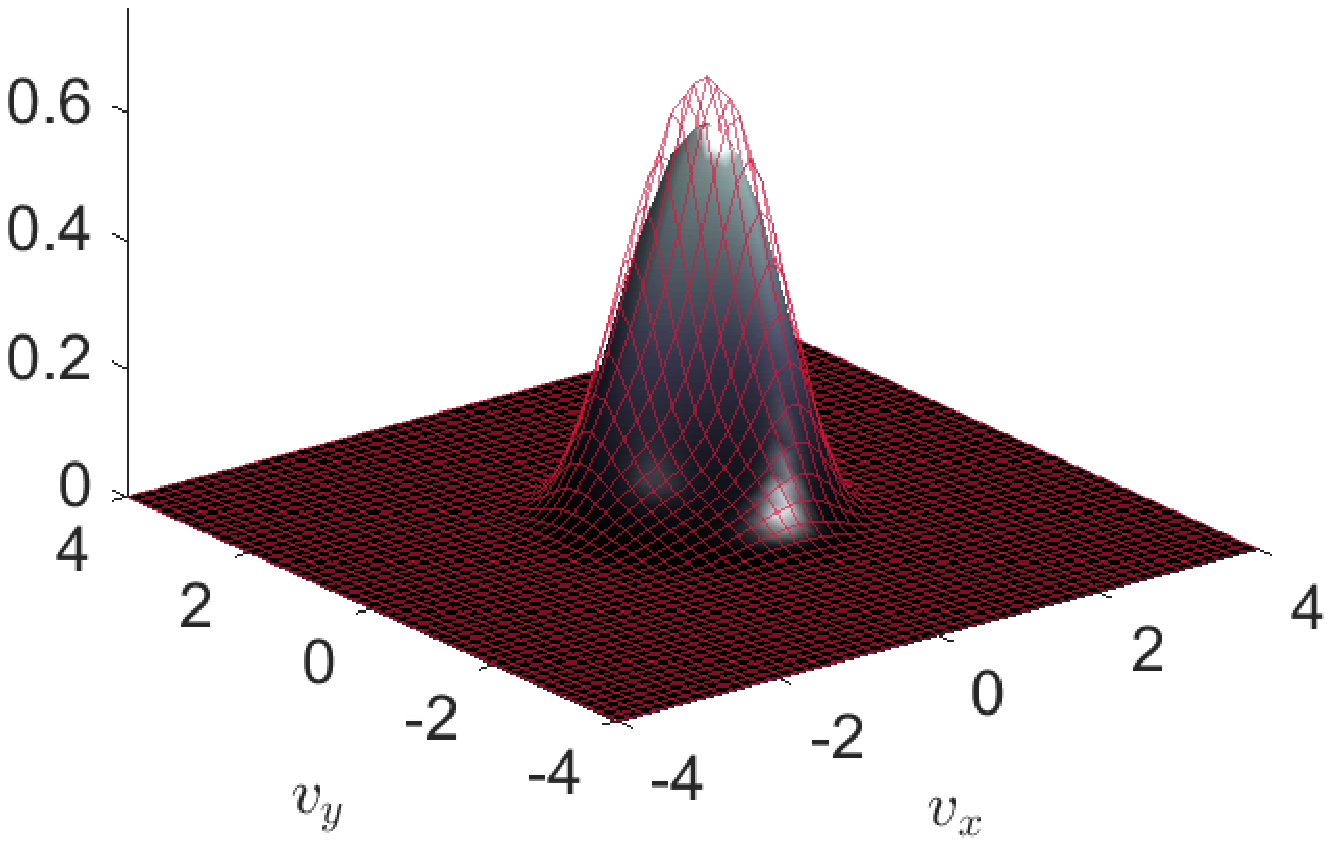}}
\hspace{-2em}
\subfigure[$D=0.2$, $\mu_x=\mu_y = 1$]{
\includegraphics[scale=0.5]{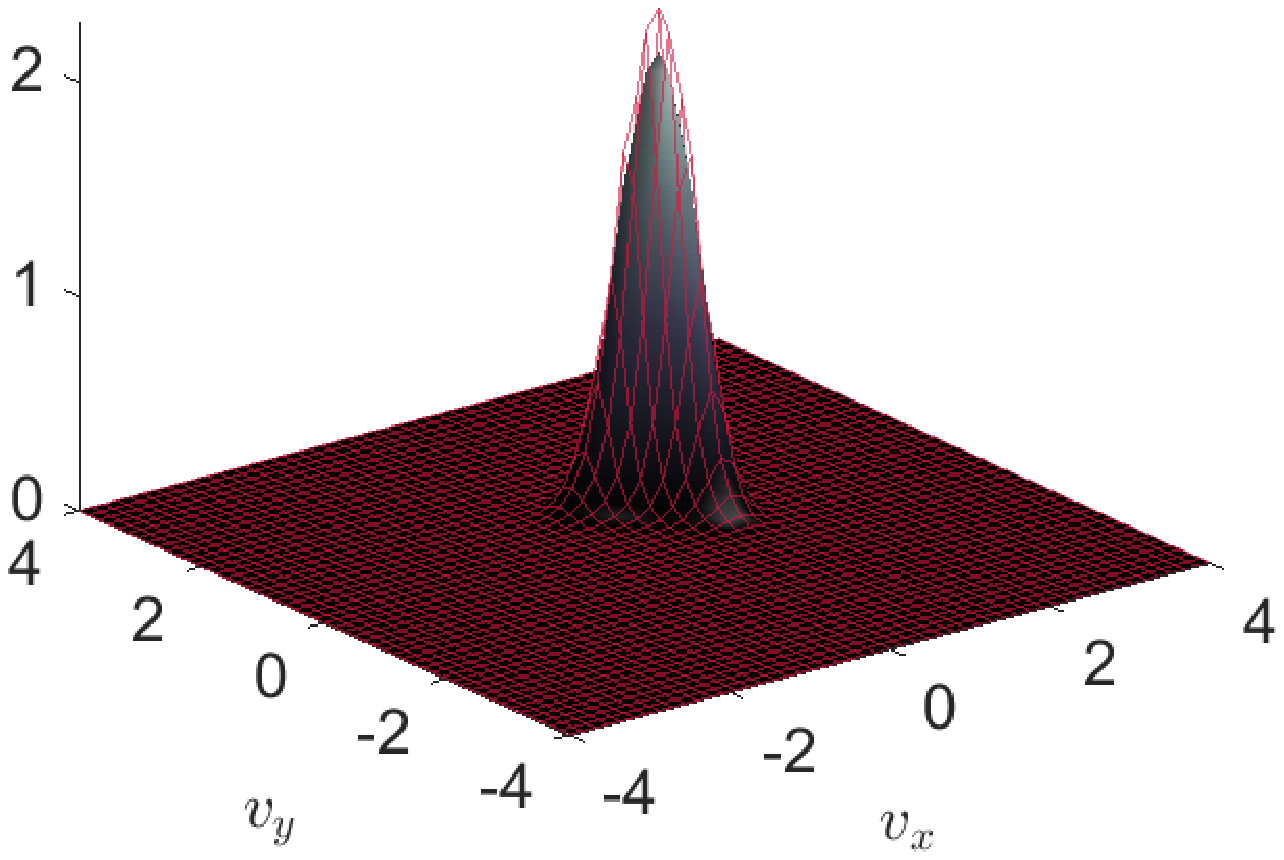}}\\
\subfigure[$D=0.8$, $\mu_x=0$, $\mu_y = 1$]{
\includegraphics[scale=0.5]{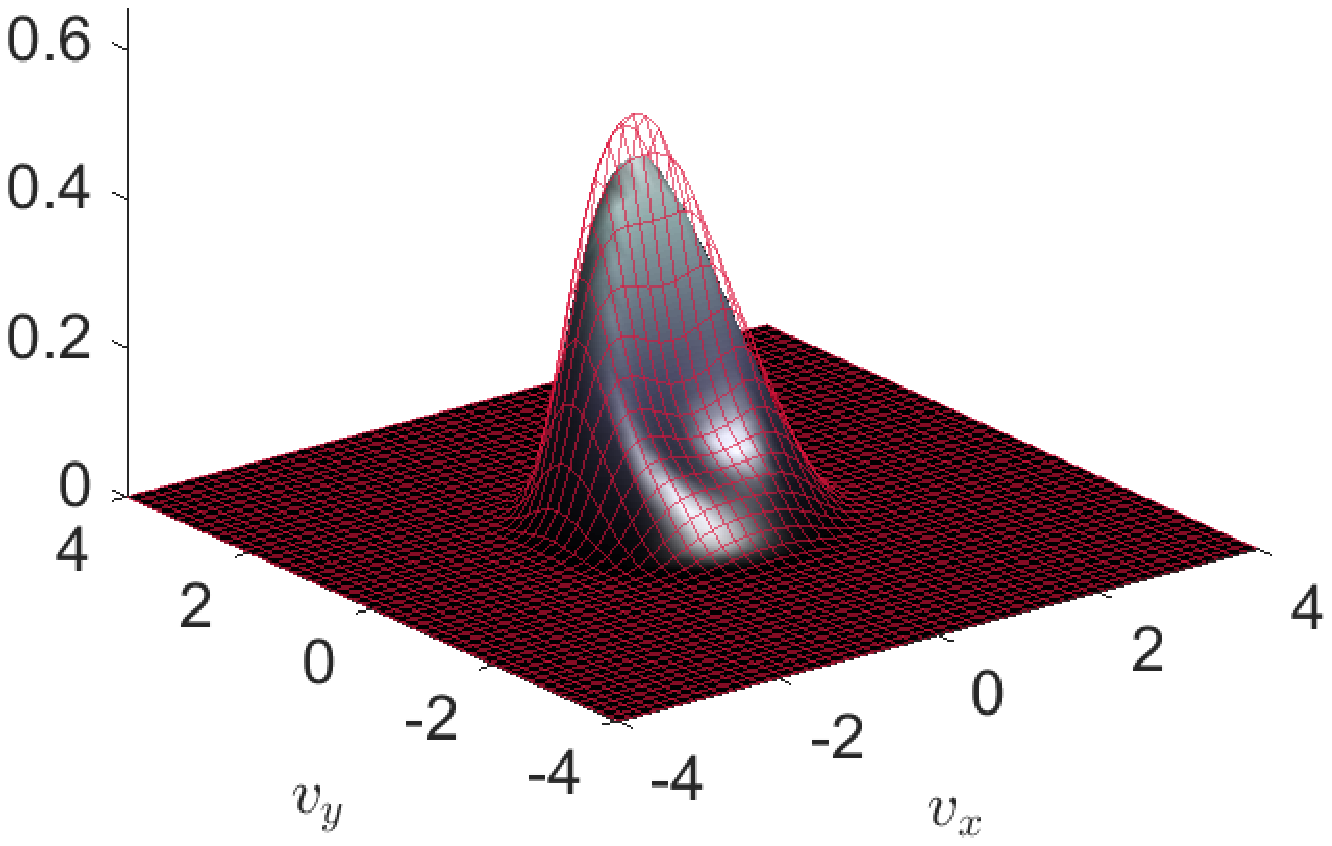}}
\hspace{-2em}
\subfigure[$D=0.2$, $\mu_x=0$, $\mu_y = 1$]{
\includegraphics[scale=0.5]{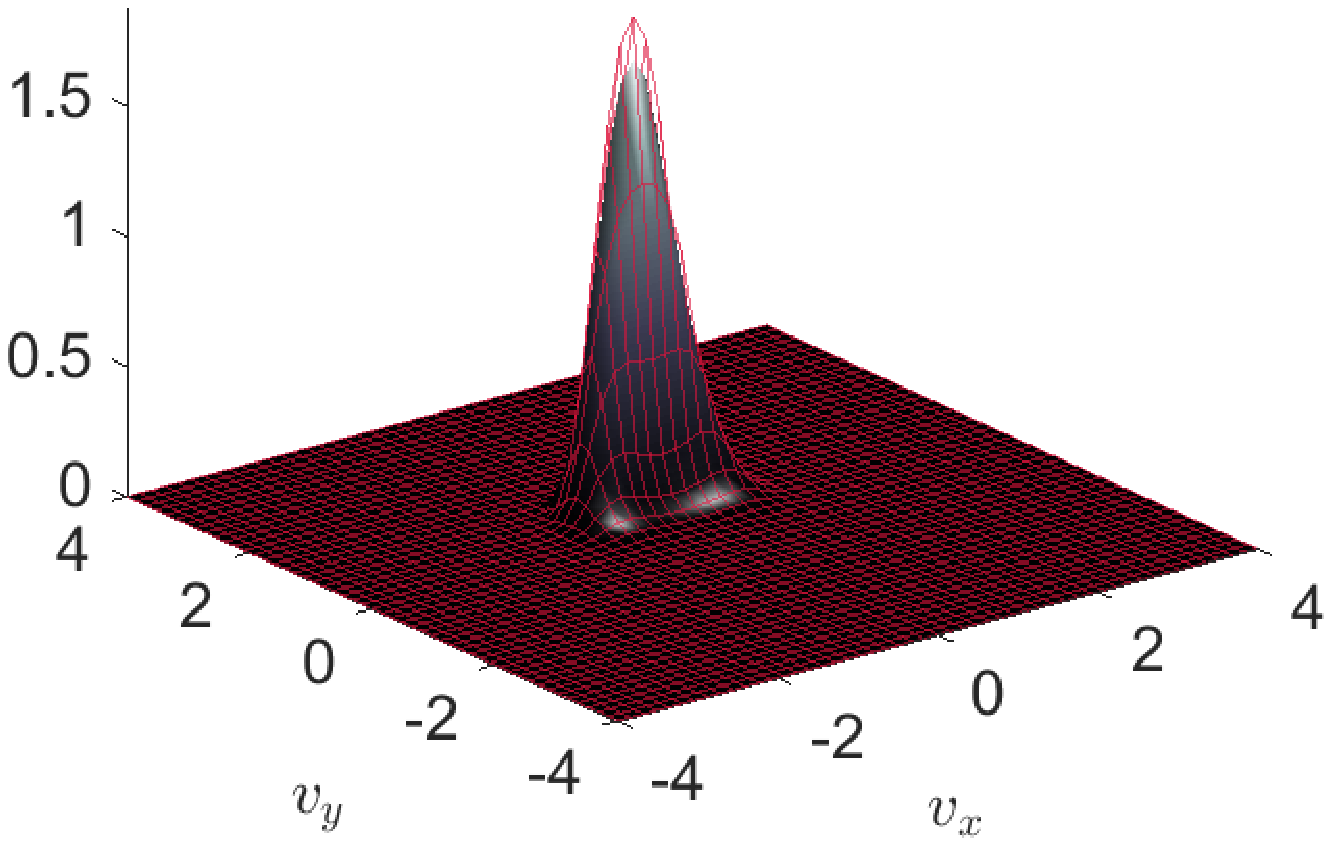}}
\caption{Example 3. Expected solutions at time $T=100$ of the 2D swarming model \eqref{eq:swarming} with initial uncertain distribution of mass $\rho(\theta) = 1+0.5\theta$, $\theta\sim \mathcal U([-1,1])$ and several background distributions \eqref{eq:back_swarming} obtained through the structure preserving stochastic Galerkin method $h=0,\dots,10$. Uniform grid for the velocity domain $[-4,4]\times[-4,4]$ with $N=51$ gridpoints in both directions, time integration with second order semi-implicit method $\Delta t = O(\Delta v)$. We visualize the upper confidence band through the red mesh.   }
\label{fig:2D}
\end{figure}

The extension of the presented structure preserving methods to the multidimensional case has been established in \cite{PZ1,PZ2}. The idea is to apply a structure preserving scheme to each dimension of the stochastic Galerkin projections 
\be\label{eq:2Dh}
\partial_t \hat f_h(v,t) = \nabla_v \cdot \left[\alpha v(|v|^2-1)\hat f_h(v,t) + (v-u_g)\hat f_h(v,t) + D\nabla_v \hat f_h(v,t) \right], \quad h = 0,\dots,M
\ee
with initial distribution $\hat f_h(v,0) = \mathbb E[f(\theta,v,0)\Phi_h(\theta)]$. In Figure \ref{fig:2D} we present the large time distributions for the choices of diffusion $D=0.2$, $D=0.8$ and three configuration of the background distribution. We applied the $SP_G$ scheme to \eqref{eq:2Dh} for $h = 0,\dots,5$ to obtain the approximation of expected distribution and of the variance. The initial distribution is here such that $\int_V f(\theta,v,0)dv = 1 +\frac{1}{2}\theta $, $\theta\sim \mathcal U([-1,1])$.

A uniform grid over $[-4,4]\times [-4,4]$ with $N = 51$ gridpoints in each direction has been considered. The  integration over the time interval $[0,100]$ has been performed taking advantage of the second order semi-implicit method with $\Delta t = O(\Delta v)$. The surfaces represent the expected solution whereas the red grids represent the upper confidence band that may be computed as $\mathbb E[f]+\sqrt{\textrm{Var}(f)}$. {The provided confidence bands gives an indication on the regions where $f(\theta,v,t)$ is more affected by the action of the uncertainty $\theta\in I_\Theta$.  }

We can clearly observe the influence of the background in shaping the large time distribution of the problem, which is steered towards the background mean. The computed confidence bands, furthermore, make clear how the behavior is stable under the action of initial uncertainties. 
%

\section*{Conclusion}
We studied the application of structure preserving type schemes to the stochastic Galerkin approximation of Fokker-Planck equations with uncertain initial distribution and background interactions. The developed methods are capable to preserve the stationary state of the problem with arbitrary accuracy and define nonnegative expected solutions under suitable time step restrictions. Both  explicit and semi-implicit type time integrations have been taken into account. Furthermore, we have proven discrete relative entropy dissipation property for the derived scheme for each projection of the original model. Several applications to prototype problems in socio-economic and life sciences have been proposed both in case of fixed and evolving background distribution together with the extension of the method to the multidimensional case. {Extensions of the scheme to the uncertain background case and to the case of vanishing diffusion are under study both in the deterministic and uncertain setting. }
\newpage
\section*{Acknowledgments}
This work has been written within the activities of GNFM (Gruppo Nazionale per la Fisica Mate\-matica) of INdAM (Istituto Nazionale di Alta Matematica), Italy. The author acknowledges partial support from the Excellence Project \textrm{CUP: E11G18000350001} of the Department of Mathematical Sciences ``G. L. Lagrange'', Politecnico di Torino, Italy. 

\end{document}